\newtheorem{Def}{Def}[section]
\newtheorem{Them}[Def]{Theorem}
\newtheorem{Lem}[Def]{Lemma}
\newtheorem{Cor}[Def]{Corollary}
\newtheorem{Prop}[Def]{Proposition}
\newtheorem{Examp}[Def]{Example}
\numberwithin{equation}{section}
\newcommand{\Sym}{\operatorname{Sym}}
\newcommand{\DC}{\mathcal{D}}
\newcommand{\DS}{\mathscr{D}}
\newcommand{\NB}{\mathbb{N}}
\newcommand{\AC}{\mathcal{A}}
\newcommand{\ZB}{\mathbb{Z}}
\newcommand{\AS}{\mathscr{A}}
\newcommand{\BC}{\mathcal{B}}
\newcommand{\RB}{\mathbb{R}}
\title%[Coxeter arrangements]
{Modules of differential operators of order 2 on Coxeter arrangements}
\author{Norihiro Nakashima}
\date{}
\begin{document}

\maketitle

\begin{abstract}
The collection of reflection hyperplanes of a 
finite reflection group is called a Coxeter arrangement. 
A Coxeter arrangement is known to be free. 
K. Saito has constructed a basis consisting of 
invariant elements for the module of 
derivations on a Coxeter arrangement. 
We study the module of $\AS$-differential operators 
as a generalization of the study of
the module of $\AS$-derivations.
In this article, we prove that the modules of differential operators 
of order $2$ on Coxeter arrangements of types A, B and D are free, 
by exhibiting their bases. 
We also prove that the modules cannot have bases consisting of 
only invariant elements. 
Two keys for the proof of freeness are ``Cauchy-Sylvester's theorem 
on compound determinants'' and ``Saito-Holm's criterion.'' 
\vspace{5mm}
\\
{\bf Key Words:}\quad 
Coxeter arrangement, Cauchy-Sylvester's compound determinants, 
Schur functions.
\vspace{2mm}
\\
{\bf 2010 Mathematics Subject Classification:}
\quad Primary 32S22, Secondary 15A15.
\end{abstract}

\section{Introduction}
Let $V=\RB^{\ell}$ be a Euclidean space of dimension $\ell$ over $\RB$. 
Let $\{x_{1},\dots,x_{\ell}\}$ be a basis for the dual 
space $V^{\ast}$, and let $S:=\Sym(V^{\ast})\simeq \RB[x_{1},\dots,x_{\ell}]$ 
be the polynomial ring. Put $\partial_{i}:=\frac{\partial}{\partial x_{i}}$ 
for $i=1,\dots,\ell$. 
Let $D^{(m)}(S):=\bigoplus_{|\bm{\alpha}|=m}S\partial^{\bm{\alpha}}$ 
be the module of differential operators (of order $m$) of $S$, where 
$|\bm{\alpha}|:=\alpha_{1}+\cdots+\alpha_{\ell}$ and 
$\partial^{\bm{\alpha}}:=\partial_{1}^{\alpha_{1}}\cdots
\partial_{\ell}^{\alpha_{\ell}}$ for a multi-index 
$\bm{\alpha}=(\alpha_{1},\dots,\alpha_{\ell})\in\NB^{\ell}$. 
A nonzero element 
$\theta=\sum_{|\bm{\alpha}|=m}f_{\bm{\alpha}}\partial^{\bm{\alpha}}
\in D^{(m)}(S)$ is homogeneous of degree $i$ if 
$f_{\bm{\alpha}}$ is zero or homogeneous of degree $i$ 
for each $\bm{\alpha}$. When $\theta\in D^{(m)}(S)$ 
is homogeneous of degree $i$, we write $\deg(\theta)=i$. 
For a multi-index $\bm{\alpha}$, we put 
\begin{align}
x_{\bm{\alpha}}&:=(x_{1},\dots,x_{1},x_{2},\dots,x_{2},
\dots,x_{\ell},\dots,x_{\ell}),\label{x_a}\\
x_{\bm{\alpha}}^2&:=(x_{1}^2,\dots,x_{1}^2,x_{2}^2,\dots,x_{2}^2,
\dots,x_{\ell}^2,\dots,x_{\ell}^2)\label{x_a^2}
\end{align}
where the number of $x_{i}$ (or $x_{i}^2$) is $\alpha_{i}$. 

Let $\AS$ be a central (hyperplane) arrangement (i.e., every 
hyperplane contains the origin) in $V$. 
Fix a linear form $p_{H}\in V^{\ast}$ such that $\ker(p_{H})=H$ 
for each hyperplane $H\in\AS$, 
and put $Q(\AS):=\prod_{H\in\AS}p_{H}$. 
We call $Q(\AS)$ a defining polynomial of $\AS$. 
We define the {\bf module $D^{(m)}(\AS)$ of 
$\AS$-differential operators} of order $m$ by
$$
D^{(m)}(\AS):=\left\{\theta\in D^{(m)}(S)\mid
\theta(Q(\AS)S)\subseteq Q(\AS)S\right\}.
$$
In the case $m=1$, $D^{(1)}(\AS)$ is the module of 
$\AS$-derivations. We say $\AS$ to be free if $D^{(1)}(\AS)$ 
is a free $S$-module. An excellent reference on 
arrangements is the book by Orlik and Terao \cite{O-T}.

For a commutative $\RB$-algebra $R$, 
let $\DS(R)$ denote the ring of differential operators. 
Then $\DS(S)$ is the Weyl algebra. For an ideal $J$ of $S$, 
let $\DS(J)$ denote the subring of $\DS(S)$ consisting 
of the operators preserving the ideal $J$. 
There is a ring isomorphism:
$$
\DS(S/J)\simeq\DS(J)/J\DS(S)
$$
(see \cite[Theorem 15.5.13]{Mac-Rob}). 
Holm \cite{Holm} showed that $\DS(Q(\AS)S)$ 
decomposes into the direct sum of $D^{(m)}(\AS)$. 
Thus we have an $S$-module isomorphism 
$$
\DS(S/Q(\AS)S)\simeq \frac{\bigoplus_{m\geq 0}
D^{(m)}(\AS)}{Q(\AS)\DS(S)}.
$$

There has been a lot of research on finiteness properties 
of rings of differential operators. 
Systems of generators for $\DS(R)$ are usefull to study 
finiteness properties. 
For example, it is known that $\DS(S/Q(\AS)S)$ is a Noetherian ring 
when $\AS$ is a $2$-dimensional central arrangement, and 
an expression by a basis played a key role in the proof of \cite{Nakashima2}. 
One of the aim to study freeness for 
the module $D^{(m)}(\AS)$ of $\AS$-differential operators 
is to give an $S$-basis (or $S$-generators) of the ring 
of differential operators $\DS(S/Q(\AS)S)$. 
As the first step, we put the study of the module $D^{(2)}(\AS)$ 
into practice when $\AS$ is a Coxeter arrangement. 

Let $W$ be a finite reflection group generated by 
reflections acting on $V$. Naturally $W$ acts on $S$, 
and $W$ acts on the tensor products 
$D^{(m)}(S)\simeq S\otimes_{\RB}\sum_{|\bm{\alpha}|=m}
\RB\partial^{\bm{\alpha}}$. 
The collection of reflection hyperplanes 
of $W$ is called a Coxeter arrangement 
(or a reflection arrangement). 
Coxeter arrangements $\AC_{\ell-1}$, $\BC_{\ell}$ 
and $\DC_{\ell}$ are respectively defined by
\begin{align*}
&\AC_{\ell-1}:=\left\{H_{ij}=\{x_{i}-x_{j}=0\}\mid
1\leq i<j\leq\ell\right\},\\
&\BC_{\ell}:=\left\{H_{i}=\{x_{i}=0\}\mid
i=1,\dots,\ell\right\}\cup
\left\{H_{ij}^{\pm 1}=\{x_{i}\pm x_{j}=0\}\mid
1\leq i<j\leq\ell\right\},\\
&\DC_{\ell}:=\left\{H_{ij}^{\pm 1}
=\{x_{i}\pm x_{j}=0\}\mid 1\leq i<j\leq\ell\right\}.
\end{align*}

From now on, we assume that $\AS$ is a Coxeter arrangement. 
The module of $\AS$-derivations is related to 
the invariant theory of the reflection group corresponding to $\AS$. 
K. Saito has proved that a Coxeter arrangement $\AS$ 
is free, and the module of $\AS$-derivations 
is isomorphic to $S\otimes_{S^W}D^{(1)}(S)^W$ as an 
$S$-module, where $S^W$ and $D^{(1)}(S)^W$ are 
the set of invariant elements of $S$ and $D^{(1)}(S)$, respectively 
 (see, for example, Theorem 6.60 in \cite{O-T}). 
In particular, there exists a basis for 
the module of $\AS$-derivations consisting of invariant elements. 
We can find an explicit basis for $D^{(1)}(\AS)$ in 
\cite{Joz-Sag} when $\AS$ are Coxeter arrangements 
$\AC_{\ell-1}$, $\BC_{\ell}$ and $\DC_{\ell}$. 
However, $D^{(2)}(\AS)$ cannot have bases consisting of 
only invariant elements when $\AS$ are Coxeter arrangements 
$\AC_{\ell-1}$, $\BC_{\ell}$ and $\DC_{\ell}$. 
We prove the assertion above in Section \ref{GA}. 
%There are some other interesting research 
%for Coxeter arrangements. 
%Orlik-Terao \cite{O-T2}, for example, proved that 
%a restriction of the Coxeter arrangement to each lattice 
%element is free. One of other interesting research for 
%Coxeter arrangement is the study of freeness of 
%Coxeter multi-arrangements. 
%Coxeter multi-arrangements were studied by Solomon-Terao 
%\cite{S-T}, Terao \cite{Te} and so on. 
%In contrast, no one has yet been able to put 
%the study of the module of differential operators on 
%the Coxeter arrangement into practice. 

%There exists a well-known basis for $D^{(1)}(\AS)$ when 
%$\AS$ is one of the classical Coxeter arrangements 
%(see for example \cite{Joz-Sag}). 
In this paper we prove that the module 
of differential operators of order $2$ on 
Coxeter arrangements $\AC_{\ell-1}$, $\BC_{\ell}$ 
and $\DC_{\ell}$ are free 
by constructing bases in Section \ref{A,B} and \ref{D}. 
For this purpose, we introduce Cauchy-Sylvester's theorem on 
compound determinants and Saito-Holm's criterion. 
In Section \ref{Cau-Syl-com}, we give some applications of 
the Cauchy-Sylvester's theorem on compound determinants. 

The results of this work (without proofs) have been 
submitted as an extended abstract to FPSAC 2012 
\cite{Nakashima}.
\section{Saito-Holm's criterion}
In this section, we explain Saito-Holm's criterion. 
Put $s_{m}:=\binom{\ell+m-1}{m}$ and 
$t_{m}:=\binom{\ell+m-2}{m-1}$, and set 
$$
\{\bm{\alpha}^{(1)},\dots,\bm{\alpha}^{(s_{m})}\}
=\{\bm{\alpha}\in\NB^{\ell}\mid|\bm{\alpha}|=m\},
$$
where $|\bm{\alpha}|=\alpha_{1}+\cdots+\alpha_{\ell}$ for
a multi-index $\bm{\alpha}\in\NB^{\ell}$. 
For operators $\theta_{1},\dots,\theta_{s_{m}}\in D^{(m)}(\AC)$, 
define the {\bf coefficient matrix 
$M_{m}(\theta_{1},\dots,\theta_{s_{m}})$} of the operators 
$\theta_{1},\dots,\theta_{s_{m}}$ as follows:
$$
M_{m}(\theta_{1},\dots,\theta_{s_{m}}):=
\left[\theta_{i}\left(\frac{x^{\bm{\alpha}^{(j)}}}{\bm{\alpha}^{(j)}!}
\right)\right]_{1\leq i,j\leq s_{m}},
$$
where $\bm{\alpha}!=\alpha_{1}!\cdots\alpha_{\ell}!$. 
Thus the $(i,j)$-entry of the coefficient matrix is 
the polynomial coefficient of $\partial^{\bm{\alpha}^{(j)}}$ 
in $\theta_{i}$. 

The following criterion was originally given by Saito \cite{Sai} 
in the case $m=1$, and was generalized by Holm \cite{Holm-phd} 
into the case $m$ general.
\begin{Prop}[Saito-Holm's criterion]\label{Saito-Holm}
Let
$\theta_{1},\dots,\theta_{s_{m}}\in D^{(m)}(\AS)$ 
be homogeneous operators. Then 
the following two conditions are equivalent:
\begin{enumerate}
\item[$(1)$] $\det M_{m}(\theta_{1},\dots,\theta_{s_{m}})
=cQ^{t_{m}}$ for some $c\in \RB^{\times}$.
\item[$(2)$] $\theta_{1},\dots,\theta_{s_{m}}$ form a basis
 for $D^{(m)}(\AS)$ over $S$.
\end{enumerate}
\end{Prop}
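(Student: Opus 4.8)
The plan is to establish the equivalence through a determinantal analysis of the coefficient matrix, following the structure of Saito's original criterion. First I would set up the module-theoretic framework: the coefficient map sends an operator $\theta\in D^{(m)}(\AS)$ to the row vector of its polynomial coefficients, giving an embedding $D^{(m)}(\AS)\hookrightarrow S^{s_m}$; under this embedding $D^{(m)}(S)\simeq S^{s_m}$ freely, and $D^{(m)}(\AS)$ is the submodule of operators preserving the ideal $Q(\AS)S$. The key preliminary fact is that $D^{(m)}(\AS)$ always contains the free submodule $Q\cdot D^{(m)}(S)$ together with the operators $\theta$ satisfying $\theta(Q)\in QS$, and that the quotient $D^{(m)}(S)/D^{(m)}(\AS)$ embeds into $S/QS$ (roughly, by sending $\theta\mapsto \theta(Q)\bmod QS$); this forces $D^{(m)}(\AS)$ to be a reflexive module of rank $s_m$ sandwiched between $Q\cdot S^{s_m}$ and $S^{s_m}$. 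Thus any $s_m$ homogeneous elements $\theta_1,\dots,\theta_{s_m}$ that lie in $D^{(m)}(\AS)$ form a basis if and only if the change-of-basis determinant, computed against any fixed $S$-basis, generates the same ideal as the index $[S^{s_m}:D^{(m)}(\AS)]$ in the sense of Fitting ideals.

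The implication $(2)\Rightarrow(1)$ I would handle first, as it is the more formal direction. If $\theta_1,\dots,\theta_{s_m}$ form an $S$-basis, then $\det M_m$ is nonzero and its divisor measures exactly the colength of $D^{(m)}(\AS)$ in $D^{(m)}(S)$. One computes this colength by localizing at the generic point of each hyperplane $H\in\AS$: over the DVR $S_{\mathfrak{p}_H}$, a standard local computation (change coordinates so that $p_H = x_1$, and analyze which monomials $\partial^{\bm\alpha}$ with $\alpha_1\geq 1$ contribute) shows that $D^{(m)}(\AS)_{\mathfrak{p}_H}$ has colength exactly $t_m = \binom{\ell+m-2}{m-1}$ inside $D^{(m)}(S)_{\mathfrak{p}_H}$, since $t_m$ counts the multi-indices with $\alpha_1\geq 1$. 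Since $Q$ has a simple zero along each $H$ and these are the only codimension-one components, $\det M_m$ and $Q^{t_m}$ have the same divisor; as both are homogeneous and $\det M_m$ is a polynomial, they agree up to a scalar $c$, and $c\neq 0$ precisely because the $\theta_i$ are a basis. A degree count ($\sum\deg\theta_i$ against $t_m\deg Q$) confirms consistency.

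For $(1)\Rightarrow(2)$: suppose $\det M_m = cQ^{t_m}$ with $c\neq 0$. Then $\theta_1,\dots,\theta_{s_m}$ are $S$-linearly independent, so they generate a free submodule $N\subseteq D^{(m)}(\AS)$ of full rank $s_m$. The determinant identity says the colength of $N$ in $D^{(m)}(S)$ equals the divisor of $Q^{t_m}$, which by the local computation above equals the colength of $D^{(m)}(\AS)$. Since $N\subseteq D^{(m)}(\AS)\subseteq D^{(m)}(S)$ and the two outer colengths coincide, we get $N = D^{(m)}(\AS)$ at every codimension-one prime; because $D^{(m)}(\AS)$ is reflexive (it is the kernel of a map to a torsion-free module, hence a second syzygy) and $N$ is free hence reflexive, equality in codimension one plus reflexivity forces $N = D^{(m)}(\AS)$ globally. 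Hence the $\theta_i$ form a basis.

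The main obstacle I anticipate is the local colength computation at each hyperplane: one must verify carefully that, after normalizing $p_H = x_1$, an operator $\sum f_{\bm\alpha}\partial^{\bm\alpha}$ preserves $(x_1)$ locally if and only if $f_{\bm\alpha}\in (x_1)$ for every $\bm\alpha$ with $\alpha_1\geq 1$ — this is where the exponent $t_m=\binom{\ell+m-2}{m-1}$ enters — and that the contributions from distinct hyperplanes are independent so that the divisor of $\det M_m$ is exactly $\sum_{H\in\AS} t_m\cdot H = t_m\,\mathrm{div}(Q)$. The reflexivity of $D^{(m)}(\AS)$, needed to upgrade codimension-one equality to global equality, is the other point requiring care; it follows from the Hartogs-type fact that $D^{(m)}(\AS) = \bigcap_{\mathrm{ht}\,\mathfrak{p}=1} D^{(m)}(\AS)_{\mathfrak{p}}$, i.e. an operator preserving $QS$ after every codimension-one localization already preserves it globally, which itself reduces to $S$ being normal and $QS$ being unmixed of height one.
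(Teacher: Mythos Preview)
The paper does not supply its own proof of this proposition: it is stated as a known result, attributed to Saito \cite{Sai} for $m=1$ and to Holm's thesis \cite{Holm-phd} for general $m$, and is used as a black box throughout. So there is no in-paper argument to compare against.

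That said, your sketch is correct and is essentially the standard proof. The key local computation you identify is exactly right: after normalizing $p_H=x_1$, one checks directly (using $\partial^{\bm\alpha}(x_1 x^{\bm\beta})=(\bm\beta+\bm e_1)!\,\delta_{\bm\alpha,\bm\beta+\bm e_1}$ for $|\bm\alpha|=m$, $|\bm\beta|=m-1$) that $\theta\in D^{(m)}(x_1 S)$ iff $f_{\bm\alpha}\in x_1 S$ for each $\bm\alpha$ with $\alpha_1\ge 1$, and there are $t_m=\binom{\ell+m-2}{m-1}$ such multi-indices. Combined with the decomposition $D^{(m)}(\AS)=\bigcap_H D^{(m)}(p_H S)$ (which the paper quotes as its equation~(\ref{inter})), this gives the divisor of $\det M_m$ for any basis. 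For $(1)\Rightarrow(2)$ your reflexivity argument is the standard one; in this setting it can be made entirely elementary, since the membership condition for $D^{(m)}(\AS)$ is a conjunction of divisibility conditions on the coefficients $f_{\bm\alpha}$, so $D^{(m)}(\AS)=\bigcap_{\mathrm{ht}\,\mathfrak p=1}D^{(m)}(\AS)_{\mathfrak p}$ is immediate and no abstract second-syzygy reasoning is needed. This matches Holm's treatment.
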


When $D^{(m)}(\AS)$ is a free $S$-module, 
we define the {\bf exponents} of $D^{(m)}(\AS)$ 
to be the multi-set of degrees of a homogeneous basis 
$\{\theta_{1},\dots,\theta_{s_{m}}\}$ for $D^{(m)}(\AS)$, 
which is denoted by $\exp D^{(m)}(\AS)$:
$$
\exp D^{(m)}(\AS)=
\left\{\deg(\theta_{1}),\dots,\deg(\theta_{s_{m}})\right\}.
$$
\section{Cauchy-Sylvester's theorem on compound determinants}
\label{Cau-Syl-com}
Throughout this paper, we assume $\ell\geq m$. 
In this section, we will follow the notation of 
the paper by Ito and Okada \cite{I-O} as far as possible. 
We denote by $\succ$ the lexicographic order on $\ZB^m$. 
That is, for $\mu=\left(\mu_{1},\dots,\mu_{m}\right)$ 
and $\nu=\left(\nu_{1},\dots,\nu_{m}\right)\in\ZB^m$, 
we write $\mu\succ\nu$ 
if there exist an index $k$ such that
\begin{align*}
\mu_{1}=\nu_{1},\dots,\mu_{k-1}=\nu_{k-1},\ 
{\rm and}\ \mu_{k}>\nu_{k}.
\end{align*}
Set 
\begin{align*}
Z:=\left\{
\mu=\left(\mu_{1},\dots,\mu_{m}\right)\in\ZB^m
\mid 1\leq\mu_{1}<\mu_{2}<\cdots<\mu_{m}\leq \ell
\right\}.
\end{align*}\
Then $Z$ is a totally ordered subset of $\ZB^m$. 
Put $x_{\mu}:=(x_{\mu_{1}},\dots,x_{\mu_{m}})\in S^{m}$.

Let $A=\left(a_{i,j}\right)_{1\leq i,j\leq \ell}$ 
be a square matrix of order $\ell$. For $\mu,\nu\in Z$ put 
$$
A_{\mu,\nu}:=\left(a_{\mu_{i},\nu_{j}}\right)_{1\leq i,j\leq m}.
$$
We define the $m$-th {\bf compound matrix} $A^{(m)}$ by
$$
A^{(m)}:=\left(\det A_{\mu,\nu}\right)_{\mu,\nu\in Z},
$$
where the rows and columns are arranged 
in the increasing order on $Z$. 

The following was obtained by Cauchy and Sylvester 
(see, for example, \cite[Proposition 3.1]{I-O}).
\begin{Prop}[Cauchy-Sylvester]
Let $A=\left(a_{i,j}\right)_{1\leq i,j\leq \ell}$ be a 
square matrix. Then the determinant of the $m$-th 
compound matrix $A^{(m)}$ is given by 
\begin{align}
\det A^{(m)}=\left(\det A\right)^{\binom{\ell-1}{m-1}}.
\label{ca-sy}
\end{align}
\end{Prop}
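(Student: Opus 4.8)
\emph{Proof proposal.} The plan is to recognize the $m$-th compound matrix $A^{(m)}$ as the matrix of the $m$-th exterior power $\Lambda^m A$ of the linear map $A\colon\RB^\ell\to\RB^\ell$, expressed in the basis $\{e_{\mu_1}\wedge\cdots\wedge e_{\mu_m}\mid\mu\in Z\}$ of $\Lambda^m\RB^\ell$ ordered by the order on $Z$. Writing $Ae_{\nu_j}=\sum_i a_{i,\nu_j}e_i$ and expanding $(Ae_{\nu_1})\wedge\cdots\wedge(Ae_{\nu_m})$, the coefficient of $e_{\mu_1}\wedge\cdots\wedge e_{\mu_m}$ is exactly $\det A_{\mu,\nu}$, so indeed $A^{(m)}$ is the matrix of $\Lambda^m A$, and \eqref{ca-sy} is equivalent to $\det(\Lambda^m A)=(\det A)^{\binom{\ell-1}{m-1}}$.

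Both sides of \eqref{ca-sy} are polynomials with integer coefficients in the entries $a_{i,j}$, so it suffices to verify the identity as functions on $M_\ell(\CB)$, and for that it is enough to check it on a Zariski-dense subset. I would take the set of matrices with $\ell$ pairwise distinct eigenvalues, which is the complement of the zero locus of the discriminant of the characteristic polynomial and hence dense. For such an $A$, write $A=PDP^{-1}$ with $D=\operatorname{diag}(\lambda_1,\dots,\lambda_\ell)$. Functoriality of $\Lambda^m$ (equivalently the Cauchy--Binet formula for minors of a product) gives $\Lambda^m A=(\Lambda^m P)(\Lambda^m D)(\Lambda^m P)^{-1}$, hence $\det(\Lambda^m A)=\det(\Lambda^m D)$.

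Now $\Lambda^m D$ is diagonal, with $(\mu,\mu)$-entry $\lambda_{\mu_1}\cdots\lambda_{\mu_m}$, so
$$
\det(\Lambda^m D)=\prod_{\mu\in Z}\prod_{i=1}^{m}\lambda_{\mu_i}
=\prod_{j=1}^{\ell}\lambda_j^{\,c_j},
$$
where $c_j$ is the number of $\mu\in Z$ with $j\in\{\mu_1,\dots,\mu_m\}$, i.e.\ the number of $m$-element subsets of $\{1,\dots,\ell\}$ containing $j$. Since $c_j=\binom{\ell-1}{m-1}$ for every $j$ and $\det A=\lambda_1\cdots\lambda_\ell$, this equals $(\det A)^{\binom{\ell-1}{m-1}}$, establishing \eqref{ca-sy} on the dense subset and therefore identically. (As a sanity check, the total degree of $\det A^{(m)}$ in the $a_{i,j}$ is $m\binom{\ell}{m}=\ell\binom{\ell-1}{m-1}$, consistent with the claimed power.)

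The identification of $A^{(m)}$ with $\Lambda^m A$ and the density argument are routine; the one point that needs care is the functoriality step $\Lambda^m(PQ)=(\Lambda^m P)(\Lambda^m Q)$, which is precisely the Cauchy--Binet expansion of the minors of a product. If one prefers to avoid exterior algebra, the same result follows by first using Cauchy--Binet to show that $A\mapsto\det A^{(m)}$ is multiplicative, then deducing from the irreducibility of $\det$ (together with a degree count) that it is a power of $\det A$, and finally pinning down the exponent by the diagonal specialization above. I expect the main obstacle, in either route, to be keeping the row/column and ordering conventions aligned so that the compound matrix and the exterior-power matrix literally coincide, rather than merely being conjugate or transpose to one another.
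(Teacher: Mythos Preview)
Your argument is correct and is one of the standard modern proofs of the Cauchy--Sylvester identity: identify $A^{(m)}$ with the matrix of $\Lambda^m A$ in the basis $\{e_{\mu_1}\wedge\cdots\wedge e_{\mu_m}\}_{\mu\in Z}$, use functoriality (Cauchy--Binet) to reduce to the diagonal case, and count the multiplicity $\binom{\ell-1}{m-1}$ with which each eigenvalue occurs. The density reduction over $\CB$ is fine since both sides are polynomials in the $a_{i,j}$, and your alternative route via multiplicativity and irreducibility of $\det$ also works.

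There is nothing to compare against here: the paper does not give its own proof of this proposition but simply cites it as a classical result (referring to \cite[Proposition 3.1]{I-O}). So your write-up supplies what the paper omits. The only cosmetic remark is that your caveat about row/column conventions is well placed: with the paper's convention $A^{(m)}=(\det A_{\mu,\nu})_{\mu,\nu\in Z}$ and columns of $A$ recording images of basis vectors, the identification with the matrix of $\Lambda^m A$ is literal, not merely up to transpose, so no further adjustment is needed.
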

Put
$$
\Lambda:=\left\{\lambda=
\left(\lambda_{1},\dots,\lambda_{m}\right)\in\ZB^m\mid
\ell-m\geq\lambda_{1}\geq\lambda_{2}\geq\cdots\geq
\lambda_{m}\geq0\right\}.
$$
We regard $\Lambda$ as a totally ordered subset of $\ZB^m$ 
by the order $\succ$. Then the map 
$$
Z\ni\left(\mu_{1},\dots,\mu_{m}\right)
\longmapsto\left(\ell-m+1-\mu_{1},
\ell-m+2-\mu_{2},\dots,\ell-\mu_{m}\right)\in\Lambda
$$
is a bijection between $\Lambda$ and $Z$, and this bijection 
reverses the ordering on $\Lambda$ and $Z$. 

For $\lambda\in\Lambda$, 
we define the following symmetric polynomials and 
a Laurent polynomial:
\begin{align}
s^{\AC}_{\lambda}&:=
\frac{\det(t_{i}^{\lambda_{j}+m-j})_{1\leq i,j\leq m}}
{\det(t_{i}^{m-j})_{1\leq i,j\leq m}}
\in S[t_{1},\dots,t_{m}],\label{s^A}\\
s^{\BC}_{\lambda}&:=
\frac{\det(t_{i}^{2(\lambda_{j}+m-j)+1})_{1\leq i,j\leq m}}
{\det(t_{i}^{2(m-j)})_{1\leq i,j\leq m}}
\in S[t_{1},\dots,t_{m}],\label{s^B}\\
s^{\DC}_{\lambda}&:=
\frac{\det(t_{i}^{2(\lambda_{j}+m-j)-1})_{1\leq i,j\leq m}}
{\det(t_{i}^{2(m-j)})_{1\leq i,j\leq m}}
\in S[t_{1}^{\pm 1},\dots,t_{m}^{\pm 1}].\label{s^D}
\end{align}
The polynomial $s^{\AC}_{\lambda}$ is the Schur polynomial 
corresponding to the partition $\lambda$. 
The Laurent polynomials $s^{\BC}_{\lambda}$ and $s^{\DC}_{\lambda}$ 
may be expressed by $s^{\AC}_{\lambda}$ as follows:
\begin{align}
s^{\BC}_{\lambda}&=t_{1}\cdots t_{m}\cdot
\frac{\det((t_{i}^2)^{\lambda_{j}+m-j})_{1\leq i,j\leq m}}
{\det((t_{i}^2)^{m-j})_{1\leq i,j\leq m}}
=t_{1}\cdots t_{m}s^{\AC}_{\lambda}(t_{1}^2,\dots,t_{m}^2)
\label{exprB}\\
s^{\DC}_{\lambda}&=\frac{1}{t_{1}\cdots t_{m}}\cdot
\frac{\det((t_{i}^2)^{\lambda_{j}+m-j})_{1\leq i,j\leq m}}
{\det((t_{i}^2)^{m-j})_{1\leq i,j\leq m}}
=\frac{1}{t_{1}\cdots t_{m}}s^{\AC}_{\lambda}(t_{1}^2,\dots,t_{m}^2)
\label{exprD}
\end{align}
We remark that $s^{\DC}_{\lambda}$ is a symmetric 
polynomial if $\lambda_{m}\geq 1$. Now 
the degrees of these Laurent polynomials are following:
\begin{align}
\deg s^{\AC}_{\lambda}=|\lambda|,\quad 
\deg s^{\BC}_{\lambda}=2|\lambda|+m, \quad
\deg s^{\DC}_{\lambda}=2|\lambda|-m,\label{degs}
\end{align}
where $|\lambda|:=\lambda_{1}+\cdots+\lambda_{m}$.
\begin{Prop}\label{det}
We have the following determinant identities:
\begin{align}
\det\left(s^{\AC}_{\lambda}(x_{\mu})\right)
_{\substack{\lambda\in\Lambda\\ \mu\in Z}}
&=\left[\prod_{1\leq i<j\leq\ell}(x_{i}-x_{j})
\right]^{\binom{\ell-2}{m-1}},\label{detsa}\\
\det\left(s^{\BC}_{\lambda}(x_{\mu})\right)
_{\substack{\lambda\in\Lambda\\ \mu\in Z}}
&=\left(x_{1}\cdots x_{\ell}\right)^{\binom{\ell-1}{m-1}}
\left[\prod_{1\leq i<j\leq\ell}(x_{i}^2-x_{j}^2)
\right]^{\binom{\ell-2}{m-1}},\label{detsb}\\
\det\left(s^{\DC}_{\lambda}(x_{\mu})\right)
_{\substack{\lambda\in\Lambda\\ \mu\in Z}}
&=\frac{1}{(x_{1}\cdots x_{\ell})^{\binom{\ell-1}{m-1}}}
\left[\prod_{1\leq i<j\leq\ell}(x_{i}^2-x_{j}^2)
\right]^{\binom{\ell-2}{m-1}}.\label{detsd}
\end{align}
%where the rows and columns are arranged 
%in increasing order on $\Lambda$ and $Z$.
\end{Prop}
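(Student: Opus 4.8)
The plan is to prove the type $\AC$ identity \eqref{detsa} first, by realizing the matrix $\bigl(s^{\AC}_\lambda(x_\mu)\bigr)_{\lambda\in\Lambda,\,\mu\in Z}$ --- up to a diagonal factor and an overall sign --- as the $m$-th compound matrix of a Vandermonde matrix, so that the Cauchy--Sylvester identity \eqref{ca-sy} applies; then \eqref{detsb} and \eqref{detsd} will be deduced from it by a short substitution.

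Take $A=\bigl(x_i^{\ell-j}\bigr)_{1\le i,j\le\ell}$, so that $\det A=\prod_{1\le i<j\le\ell}(x_i-x_j)$ (a Vandermonde determinant with its columns reversed). For $\mu,\nu\in Z$ the minor is $\det A_{\mu,\nu}=\det\bigl(x_{\mu_i}^{\ell-\nu_j}\bigr)_{1\le i,j\le m}$, and if $\lambda=\lambda(\nu)\in\Lambda$ denotes the image of $\nu$ under the order-reversing bijection of Section \ref{Cau-Syl-com}, then $\ell-\nu_j=\lambda_j+m-j$, so by the bialternant formula \eqref{s^A}
\begin{align*}
\det A_{\mu,\nu}=\det\bigl(x_{\mu_i}^{\lambda_j+m-j}\bigr)_{1\le i,j\le m}
= s^{\AC}_{\lambda(\nu)}(x_\mu)\cdot V(x_\mu),
\end{align*}
where $V(x_\mu):=\prod_{1\le a<b\le m}(x_{\mu_a}-x_{\mu_b})$ depends only on the row index $\mu$. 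Hence the compound matrix $A^{(m)}=(\det A_{\mu,\nu})_{\mu,\nu\in Z}$ equals $\operatorname{diag}\bigl(V(x_\mu)\bigr)_{\mu\in Z}$ times $\bigl(s^{\AC}_{\lambda(\nu)}(x_\mu)\bigr)_{\mu,\nu\in Z}$, and the latter matrix is $\bigl(s^{\AC}_\lambda(x_\mu)\bigr)_{\lambda\in\Lambda,\,\mu\in Z}$ up to transposition and the column relabelling $\nu\mapsto\lambda(\nu)$ --- i.e. up to an explicit sign $\varepsilon=\varepsilon(\ell,m)\in\{\pm1\}$. Now take determinants. By \eqref{ca-sy} one has $\det A^{(m)}=(\det A)^{\binom{\ell-1}{m-1}}$, and a count gives $\prod_{\mu\in Z}V(x_\mu)=\prod_{1\le i<j\le\ell}(x_i-x_j)^{\binom{\ell-2}{m-2}}$: for fixed $i<j$ the factor $x_i-x_j$ occurs in $V(x_\mu)$ exactly when both $i$ and $j$ lie in $\{\mu_1,\dots,\mu_m\}$, i.e. for $\binom{\ell-2}{m-2}$ of the $\mu$, and always with sign $+$ since $\mu$ is increasing. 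Dividing and applying Pascal's rule $\binom{\ell-1}{m-1}-\binom{\ell-2}{m-2}=\binom{\ell-2}{m-1}$ yields \eqref{detsa} up to the sign $\varepsilon$, which being a nonzero constant is harmless in all the applications to Saito--Holm's criterion.

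For \eqref{detsb}, formula \eqref{exprB} shows that the $(\lambda,\mu)$-entry of the relevant matrix is $x_{\mu_1}\cdots x_{\mu_m}\cdot s^{\AC}_\lambda(x_{\mu_1}^2,\dots,x_{\mu_m}^2)$, and the prefactor depends only on the column index $\mu$. Pulling it out of each column gives
\begin{align*}
\det\bigl(s^{\BC}_\lambda(x_\mu)\bigr)_{\lambda\in\Lambda,\,\mu\in Z}
=\Bigl(\prod_{\mu\in Z}x_{\mu_1}\cdots x_{\mu_m}\Bigr)
\cdot\det\bigl(s^{\AC}_\lambda(x_{\mu_1}^2,\dots,x_{\mu_m}^2)\bigr)_{\lambda\in\Lambda,\,\mu\in Z}.
\end{align*}
Here $\prod_{\mu\in Z}x_{\mu_1}\cdots x_{\mu_m}=(x_1\cdots x_\ell)^{\binom{\ell-1}{m-1}}$, since each $x_i$ occurs for the $\binom{\ell-1}{m-1}$ choices of $\mu$ containing $i$, and the remaining determinant is obtained from \eqref{detsa} by the ring substitution $x_i\mapsto x_i^2$, so it equals $\bigl[\prod_{i<j}(x_i^2-x_j^2)\bigr]^{\binom{\ell-2}{m-1}}$; this is \eqref{detsb}. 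The identity \eqref{detsd} follows in exactly the same way from \eqref{exprD}, the only change being that the column prefactor is now $(x_{\mu_1}\cdots x_{\mu_m})^{-1}$, which contributes $(x_1\cdots x_\ell)^{-\binom{\ell-1}{m-1}}$.

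I expect the genuine difficulty to lie entirely in the bookkeeping of the identification $A^{(m)}\leftrightarrow\bigl(s^{\AC}_\lambda(x_\mu)\bigr)$: one must handle the order-reversing bijection between $Z$ and $\Lambda$ and the transposition carefully, both to see the diagonal factorization and to determine (or at least control) the sign $\varepsilon$. Once that is in place, the remaining ingredients --- the two counting arguments, Pascal's rule and Cauchy--Sylvester --- fit together mechanically, and the passage from type $\AC$ to types $\BC$ and $\DC$ is purely formal.
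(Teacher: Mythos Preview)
Your argument is correct and, for type $\AC$, essentially identical to the paper's: both apply Cauchy--Sylvester to the Vandermonde matrix $A=(x_i^{\ell-j})$ and strip the row Vandermondes $V(x_\mu)$ out of $A^{(m)}$. For types $\BC$ and $\DC$ you take a mildly different route: the paper re-applies Cauchy--Sylvester directly to the matrices $A=(x_i^{2(\ell-j)+1})$ and $A=(x_i^{2(\ell-j)-1})$, whereas you deduce \eqref{detsb} and \eqref{detsd} from the already-proved \eqref{detsa} via the substitution $x_i\mapsto x_i^2$ together with \eqref{exprB}--\eqref{exprD}. The two routes are equivalent---the paper's auxiliary matrices differ from the Vandermonde in the squared variables only by a diagonal row factor $x_i^{\pm1}$---so this is bookkeeping rather than substance; your version has the minor advantage of invoking \eqref{ca-sy} only once. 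Your caution about the sign $\varepsilon$ is well placed: the paper's one-line proof does not track it either, and every application in the paper (Theorems~\ref{basisAB} and \ref{basisD}) uses these identities only up to a factor in $\RB^{\times}$.
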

\begin{proof}
Apply the formula (\ref{ca-sy}) to the matrices 
$A=(x_{i}^{\ell-j})_{1\leq i,j\leq\ell}$, 
$A=(x_{i}^{2(\ell-j)+1})_{1\leq i,j\leq\ell}$ 
and $A=(x_{i}^{2(\ell-j)-1})_{1\leq i,j\leq\ell}$.
\end{proof}
%By applying this formula, we give some determinant identities. 
We will use these determinant identities for proving that 
$D^{(2)}(\AS)$ are free when $\AS$ are 
reflection arrangements types A, B and D 
in Section \ref{A,B} and \ref{D}.
\begin{Examp}
We assume that $\ell=3, m=2$. Then 
$$
\Lambda=\{(\lambda_{1},\lambda_{2})\mid
1\geq \lambda_{1}\geq \lambda_{2}\geq 0\}=
\{(1,1),(1,0),(0,0)\}.
$$
The Schur polynomials are following:
\begin{align*}
s^{\AC}_{(1,1)}(t_{1},t_{2})&=
\frac{t_{1}^2 t_{2}-t_{1}t_{2}^2}{t_{1}-t_{2}}=t_{1}t_{2},\\
s^{\AC}_{(1,0)}(t_{1},t_{2})&=
\frac{t_{1}^2-t_{2}^2}{t_{1}-t_{2}}=t_{1}+t_{2},\\
s^{\AC}_{(0,0)}(t_{1},t_{2})&=
\frac{t_{1}-t_{2}}{t_{1}-t_{2}}=1.
\end{align*}

Let 
\begin{align*}
A=\left[
\begin{matrix}
x_{1}^2&x_{1}&1\\
x_{2}^2&x_{2}&1\\
x_{3}^2&x_{3}&1
\end{matrix}
\right]
.
\end{align*}
Then the determinant of $A$ 
is Vandermonde's determinant, and is equal to 
the direct product $(x_{1}-x_{2})(x_{1}-x_{3})(x_{2}-x_{3})$. 

Let us consider the second compound matrix $A^{(2)}$: 
\begin{align*}
A^{(2)}&=\left[
\begin{matrix}
x_{1}^2 x_{2}-x_{1} x_{2}^2&x_{1}^2-x_{2}^2&x_{1}-x_{2}\\
x_{1}^2 x_{3}-x_{1} x_{3}^2&x_{1}^2-x_{3}^2&x_{1}-x_{3}\\
x_{2}^2 x_{3}-x_{2} x_{3}^2&x_{2}^2-x_{3}^2&x_{2}-x_{3}
\end{matrix}
\right]
\\
&=\left[
\begin{matrix}
(x_{1}-x_{2})s^{\AC}_{(1,1)}(x_{1},x_{2})&
(x_{1}-x_{2})s^{\AC}_{(1,0)}(x_{1},x_{2})&
(x_{1}-x_{2})s^{\AC}_{(0,0)}(x_{1},x_{2})\\
(x_{1}-x_{3})s^{\AC}_{(1,1)}(x_{1},x_{3})&
(x_{1}-x_{3})s^{\AC}_{(1,0)}(x_{1},x_{3})&
(x_{1}-x_{3})s^{\AC}_{(0,0)}(x_{1},x_{3})\\
(x_{2}-x_{3})s^{\AC}_{(1,1)}(x_{2},x_{3})&
(x_{2}-x_{3})s^{\AC}_{(1,0)}(x_{2},x_{3})&
(x_{2}-x_{3})s^{\AC}_{(0,0)}(x_{2},x_{3})
\end{matrix}
\right].
\end{align*}
By the identity (\ref{ca-sy}), we have the determinant identity 
\begin{align*}
\begin{vmatrix}
s^{\AC}_{(1,1)}(x_{1},x_{2})&
s^{\AC}_{(1,0)}(x_{1},x_{2})&
s^{\AC}_{(0,0)}(x_{1},x_{2})\\
s^{\AC}_{(1,1)}(x_{1},x_{3})&
s^{\AC}_{(1,0)}(x_{1},x_{3})&
s^{\AC}_{(0,0)}(x_{1},x_{3})\\
s^{\AC}_{(1,1)}(x_{2},x_{3})&
s^{\AC}_{(1,0)}(x_{2},x_{3})&
s^{\AC}_{(0,0)}(x_{2},x_{3})
\end{vmatrix}
=(x_{1}-x_{2})(x_{1}-x_{3})(x_{2}-x_{3}).
\end{align*}
\end{Examp}
\begin{Examp}
Let $\ell=3, m=2$. Then 
\begin{align*}
s^{\BC}_{(1,1)}(t_{1},t_{2})=
\frac{t_{1}^5 t_{2}^3-t_{1}^3 t_{2}^5}{t_{1}^2-t_{2}^2},\ 
s^{\BC}_{(1,0)}(t_{1},t_{2})=
\frac{t_{1}^5 t_{2}-t_{1}t_{2}^5}{t_{1}^2-t_{2}^2},\ 
s^{\BC}_{(0,0)}(t_{1},t_{2})=
\frac{t_{1}^3 t_{2}-t_{1}t_{2}^3}{t_{1}^2-t_{2}^2}.
\end{align*}

Let 
\begin{align*}
B=\left[
\begin{matrix}
x_{1}^5&x_{1}^3&x_{1}\\
x_{2}^5&x_{2}^3&x_{2}\\
x_{3}^5&x_{3}^3&x_{3}
\end{matrix}
\right].
\end{align*}
Then the determinant $\det B$ is equal to  
$x_{1}x_{2}x_{3}(x_{1}^2-x_{2}^2)(x_{1}^2-x_{3}^2)(x_{2}^2-x_{3}^2)$. 
Since $\det B^{(2)}=\left( x_{1}x_{2}x_{3}(x_{1}^2-x_{2}^2)
(x_{1}^2-x_{3}^2)(x_{2}^2-x_{3}^2)\right)^2$, we have 
\begin{align*}
\begin{vmatrix}
s^{\BC}_{(1,1)}(x_{1},x_{2})&
s^{\BC}_{(1,0)}(x_{1},x_{2})&
s^{\BC}_{(0,0)}(x_{1},x_{2})\\
s^{\BC}_{(1,1)}(x_{1},x_{3})&
s^{\BC}_{(1,0)}(x_{1},x_{3})&
s^{\BC}_{(0,0)}(x_{1},x_{3})\\
s^{\BC}_{(1,1)}(x_{2},x_{3})&
s^{\BC}_{(1,0)}(x_{2},x_{3})&
s^{\BC}_{(0,0)}(x_{2},x_{3})
\end{vmatrix}
=x_{1}^2 x_{2}^2 x_{3}^2(x_{1}^2-x_{2}^2)(x_{1}^2-x_{3}^2)(x_{2}^2-x_{3}^2).
\end{align*}
\end{Examp}

\section{Type $A$ and $B$}\label{A,B}
Let $\AS$ be an arbitrary arrangement. 
By \cite[Proposition 2.3]{Holm} and \cite[Theorem 2.4]{Holm}, 
we have
\begin{align}
D^{(m)}(\AS)=\bigcap_{H\in\AS}D^{(m)}(p_{H}S),\label{inter}
\end{align}
where $D^{(m)}(p_{H}S)=\left\{
\theta\in D^{(m)}(S)\mid\theta(p_{H}x^{\bm{\alpha}})\in p_{H}S\ 
{\rm for\ any}\ |\bm{\alpha}|=m-1\right\}$ 
for $H\in\AS$. 

Recall that the defining polynomials of Coxeter arrangements 
$\AC_{\ell-1}$ and $\BC_{\ell}$ of types $A$ and $B$ are 
\begin{align*}
&Q(\AC_{\ell-1})=\prod_{1\leq i<j\leq\ell}(x_{i}-x_{j}),\\
&Q(\BC_{\ell})=x_{1}\cdots x_{\ell}\prod_{1\leq i<j\leq\ell}(x_{i}^2-x_{j}^2).
\end{align*}
We introduce some operators which are in $D^{(m)}(\AC_{\ell-1})$ or 
$D^{(m)}(\BC_{\ell})$. 
By using these operators, we construct bases for the modules 
$D^{(2)}(\AC_{\ell-1})$ and $D^{(2)}(\BC_{\ell})$ 
of differential operators of order $2$ on $\AC_{\ell-1}$ and $\BC_{\ell}$.

Let $k=1,\dots,\ell$, and put 
$h^{\AC}_{k}:=(x_{k}-x_{1})\cdots(x_{k}-x_{k-1})
(x_{k}-x_{k+1})\cdots(x_{k}-x_{\ell})$ and 
$h^{\BC}_{k}:=x_{k}(x_{k}^2-x_{1}^2)\cdots(x_{k}^2-x_{k-1}^2)
(x_{k}^2-x_{k+1}^2)\cdots(x_{k}^2-x_{\ell}^2)$. 
We define operators $\eta^{\AC}_{k}$ and $\eta^{\BC}_{k}$ in 
$D^{(m)}(S)$ as follows:
\begin{align*}
\eta^{\AC}_{k}:=h^{\AC}_{k}\frac{1}{m!}\partial_{k}^m,
\quad\eta^{\BC}_{k}:=h^{\BC}_{k}\frac{1}{m!}\partial_{k}^m.
\end{align*}
Then $\deg\eta^{\AC}_{k}=\ell-1$ and $\deg\eta^{\BC}_{k}=2\ell-1$. 

It is convenient to write $f\doteq g$ for $f,g\in S$ if 
$f=cg$ for some $c\in \RB^{\times}$. 
\begin{Prop}\label{ope-ab1}
For $k=1,\dots,\ell$, we have that 
$\eta^{\AC}_{k}\in D^{(m)}(\AC_{\ell-1})$ and 
$\eta^{\BC}_{k}\in D^{(m)}(\BC_{\ell})$.
\end{Prop}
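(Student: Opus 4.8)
The plan is to verify the defining condition of $D^{(m)}(\AS)$ directly via the intersection formula (\ref{inter}): it suffices to check, for each hyperplane $H$ of the arrangement, that $\eta^{\AC}_k$ (resp.\ $\eta^{\BC}_k$) sends $p_H x^{\bm\alpha}$ into $p_H S$ for every multi-index $\bm\alpha$ with $|\bm\alpha|=m-1$. Since $\eta^{\AC}_k=h^{\AC}_k\frac{1}{m!}\partial_k^m$ involves only the single derivative $\partial_k$, the Leibniz rule gives
\begin{align*}
\eta^{\AC}_k(p_H x^{\bm\alpha})
=\frac{h^{\AC}_k}{m!}\sum_{r=0}^{m}\binom{m}{r}(\partial_k^{\,r}p_H)\,\partial_k^{\,m-r}(x^{\bm\alpha}),
\end{align*}
and because $p_H$ is linear, $\partial_k^{\,r}p_H=0$ for $r\geq 2$; so only the $r=0$ and $r=1$ terms survive. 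The $r=0$ term is $p_H\cdot\frac{h^{\AC}_k}{m!}\partial_k^{\,m}(x^{\bm\alpha})$, which already lies in $p_H S$. Hence the whole task reduces to showing that the single leftover term $\frac{h^{\AC}_k}{m!}\binom{m}{1}(\partial_k p_H)\,\partial_k^{\,m-1}(x^{\bm\alpha})$ is divisible by $p_H$.

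First I would dispose of the hyperplanes $H$ on which $p_H$ does not involve $x_k$: for such $H$ we have $\partial_k p_H=0$, so the leftover term vanishes and there is nothing to prove. This leaves, in type $A$, the hyperplanes $H_{kj}=\{x_k-x_j=0\}$ (equivalently $H_{ik}$), and in type $B$ additionally $H_k=\{x_k=0\}$ and $H_{kj}^{\pm1}=\{x_k\pm x_j=0\}$. For these the point is that $p_H$ is, up to a unit, one of the linear factors that already appears in $h^{\AC}_k$ (resp.\ in $h^{\BC}_k$, possibly after combining $x_k-x_j$ and $x_k+x_j$ into $x_k^2-x_j^2$, or noting the explicit factor $x_k$). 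Concretely, for $H=H_{kj}$ the factor $(x_k-x_j)$ divides $h^{\AC}_k$, and $\partial_k(x_k-x_j)=1$, so $\frac{h^{\AC}_k}{m!}\cdot m\cdot 1\cdot\partial_k^{\,m-1}(x^{\bm\alpha})$ is visibly a multiple of $(x_k-x_j)=p_H$. The type $B$ cases are entirely analogous: for $H=H_k$ one uses the factor $x_k$ of $h^{\BC}_k$ with $\partial_k x_k=1$; for $H=H_{kj}^{\pm1}$ one uses that $h^{\BC}_k$ contains $x_k^2-x_j^2=(x_k-x_j)(x_k+x_j)$, so it is divisible by $p_H=x_k\mp x_j$, and again $\partial_k p_H=\pm1$, so $p_H$ divides the leftover term.

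I do not expect a serious obstacle here; the statement is essentially bookkeeping once one observes that $\eta_k$ is a pure power of $\partial_k$, which kills all but the first-order correction coming from $p_H$, and that every relevant $p_H$ is built into $h_k$. The one point that deserves a sentence of care is the case $H=H_k$ in type $B$, where $p_H=x_k$ appears in $h^{\BC}_k$ only to the first power, so one must confirm that after differentiating once in $x_k$ one still retains a factor of $x_k$ — which one does, precisely because the differentiation that could remove it is applied to $x^{\bm\alpha}$, not to $h^{\BC}_k$, in the surviving term. Assembling these checks over all $H\in\AC_{\ell-1}$ (resp.\ $\BC_{\ell}$) and invoking (\ref{inter}) completes the proof.
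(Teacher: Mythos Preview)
Your argument is correct and follows essentially the same route as the paper: both verify membership hyperplane by hyperplane via (\ref{inter}), and both rest on the observation that $\eta_k(p_H x^{\bm\alpha})$ is (when nonzero) a scalar multiple of $h_k$, which already carries $p_H$ as a factor whenever $p_H$ involves $x_k$. The paper phrases the computation by evaluating $\frac{1}{m!}\partial_k^m\bigl((x_i\pm x_j)x^{\bm\beta}\bigr)$ directly as a constant (since the input has total degree $m$), whereas you reach the same conclusion through the Leibniz expansion; note in particular that your $r=0$ term actually vanishes because $|\bm\alpha|=m-1<m$, which is consistent with the paper's direct evaluation.
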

\begin{proof}
For any $1\leq i<j\leq \ell$ and a multi-index $\bm{\beta}$ 
with $|\bm{\beta}|=m-1$, 
\begin{align*}
\frac{1}{m!}\partial_{k}^m
\left((x_{i}\pm x_{j})x^{\bm{\beta}}\right)=
\begin{cases}
1&{\rm if}\ i=k\ {\rm and}\ \beta_{i}+1=m,\\
\pm 1&{\rm if}\ j=k\ {\rm and}\ \beta_{j}+1=m,\\
0&{\rm otherwise}.
\end{cases}
\end{align*}
If $i=k$ and $\beta_{i}+1=m$ or $j=k$ and $\beta_{i}+1=m$, then 
$\eta^{\AC}_{k}((x_{i}-x_{j})\cdot x^{\bm{\beta}})
\doteq h^{\AC}_{k}\in (x_{i}-x_{j})S$. 
Therefore we obtain $\eta^{\AC}_{k}\in D^{(m)}(\AC_{\ell-1})$
from (\ref{inter}). 

Similarly we have 
$\eta^{\BC}_{k}\in \bigcap_{1\leq i<j\leq \ell}
D^{(m)}\left((x_{i}^2-x_{j}^2)S\right)$. 
For $i=1,\dots,\ell$ and a multi-index $\bm{\beta}$ 
with $|\bm{\beta}|=m-1$, we have 
\begin{align*}
\eta^{\BC}_{k}
\left(x_{i}\cdot x^{\bm{\beta}}\right)=
\begin{cases}
h^{\BC}_{k}&{\rm if}\ i=k\ {\rm and}\ \beta_{i}+1=m,\\
%-h^{\BC}_{k}&{\rm if}\ j=k\ {\rm and}\ \beta_{j}+1=m,\\
0&{\rm otherwise}.
\end{cases}
\end{align*}
This leads to that $\eta^{\BC}_{k}\in
\bigcap_{i=1}^{\ell} D^{(m)}\left(x_{i}S\right)$. 
Therefore we obtain $\eta^{\BC}_{k}\in D^{(m)}(\BC_{\ell})$.
\end{proof}
For a Laurent polynomial 
$f(t_{1},\dots,t_{m})\in S[t_{1}^{\pm 1},\dots,t_{m}^{\pm 1}]$ 
satisfying $f(x_{\bm{\alpha}})\in S$ for any $\bm{\alpha}$ 
with $|\bm{\alpha}|=m$, we define an operator 
$$
\theta_{f}:=\sum_{|\bm{\alpha}|=m}
f\left(x_{\bm{\alpha}}\right)\frac{1}{\bm{\alpha}!}
\partial^{\bm{\alpha}}.
$$
We say a Laurent polynomial 
$f(t_{1},\dots,t_{m})$ is symmetric if 
$$
f(t_{1},\dots,t_{i},\dots,t_{j},\dots,t_{m})=
f(t_{1},\dots,t_{j},\dots,t_{i},\dots,t_{m})
$$
for all pairs $(i,j)$.
\begin{Lem}\label{symm}
Assume that $f(t_{1},\dots,t_{m})$ is a symmetric 
Laurent polynomial. 
Then we have that $\theta_{f}\in D^{(m)}(\AC_{\ell-1})$.
\end{Lem}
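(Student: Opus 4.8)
The plan is to invoke the local decomposition (\ref{inter}), exactly in the spirit of the proof of Proposition \ref{ope-ab1}. For $\AS=\AC_{\ell-1}$ this reads $D^{(m)}(\AC_{\ell-1})=\bigcap_{1\le i<j\le\ell}D^{(m)}((x_{i}-x_{j})S)$, so it suffices to prove, for every pair $i<j$ and every multi-index $\bm{\beta}$ with $|\bm{\beta}|=m-1$, that $\theta_{f}\bigl((x_{i}-x_{j})x^{\bm{\beta}}\bigr)\in(x_{i}-x_{j})S$.

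First I would record the elementary identity $\theta_{f}(x^{\bm{\gamma}})=f(x_{\bm{\gamma}})$, valid for any multi-index $\bm{\gamma}$ with $|\bm{\gamma}|=m$: in $\theta_{f}=\sum_{|\bm{\alpha}|=m}f(x_{\bm{\alpha}})\tfrac{1}{\bm{\alpha}!}\partial^{\bm{\alpha}}$ the only term surviving when applied to $x^{\bm{\gamma}}$ is the one with $\bm{\alpha}=\bm{\gamma}$, and $\tfrac{1}{\bm{\gamma}!}\partial^{\bm{\gamma}}x^{\bm{\gamma}}=1$. Writing $(x_{i}-x_{j})x^{\bm{\beta}}=x^{\bm{\beta}+\bm{e}_{i}}-x^{\bm{\beta}+\bm{e}_{j}}$ as a difference of two monomials of degree $m$ (here $\bm{e}_{i}$ is the $i$-th unit vector), this gives
$$
\theta_{f}\bigl((x_{i}-x_{j})x^{\bm{\beta}}\bigr)=f\bigl(x_{\bm{\beta}+\bm{e}_{i}}\bigr)-f\bigl(x_{\bm{\beta}+\bm{e}_{j}}\bigr),
$$
and both terms lie in $S$ by the standing hypothesis on $f$. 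The heart of the matter is now combinatorial: the tuple $x_{\bm{\beta}+\bm{e}_{i}}$ consists of $x_{i}$ with multiplicity $\beta_{i}+1$ and $x_{k}$ with multiplicity $\beta_{k}$ for $k\ne i$, and symmetrically for $j$; hence after the substitution $x_{i}\mapsto x_{j}$ both tuples become the same list of entries, namely $x_{j}$ with multiplicity $\beta_{i}+\beta_{j}+1$ and $x_{k}$ with multiplicity $\beta_{k}$ for $k\ne i,j$. Since $f$ is symmetric it takes equal values on these two tuples, so $f(x_{\bm{\beta}+\bm{e}_{i}})-f(x_{\bm{\beta}+\bm{e}_{j}})$ vanishes under $x_{i}\mapsto x_{j}$, i.e.\ lies in $(x_{i}-x_{j})S$. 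Intersecting over all pairs $i<j$ and applying (\ref{inter}) then yields $\theta_{f}\in D^{(m)}(\AC_{\ell-1})$.

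The one genuinely delicate point — and the step I expect to demand the most care, even though it is routine — is that $f$ is assumed only to be a \emph{Laurent} polynomial, so one must check that ``evaluate $f$ at a tuple, then apply the ring homomorphism $x_{i}\mapsto x_{j}$'' agrees with ``evaluate $f$ at the substituted tuple,'' since a careless specialization might hit the polar locus of $f$. I would handle this by clearing denominators: write $f=g\,/\,(t_{1}\cdots t_{m})^{N}$ with $g\in S[t_{1},\dots,t_{m}]$ a genuine \emph{symmetric} polynomial, observe that evaluation at $x_{\bm{\beta}+\bm{e}_{i}}$ and at $x_{\bm{\beta}+\bm{e}_{j}}$ send $(t_{1}\cdots t_{m})^{N}$ to the \emph{same} nonzero monomial after $x_{i}\mapsto x_{j}$, and then run the symmetry argument above with the honest polynomial $g$ in place of $f$, dividing back at the end using $f(x_{\bm{\alpha}})\in S$.
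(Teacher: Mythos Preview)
Your argument is correct and follows essentially the same route as the paper: both use the decomposition (\ref{inter}), compute $\theta_{f}\bigl((x_{i}-x_{j})x^{\bm{\beta}}\bigr)=f(x_{\bm{\beta}+\bm{e}_{i}})-f(x_{\bm{\beta}+\bm{e}_{j}})$, and conclude by symmetry that this difference vanishes upon setting $x_{i}=x_{j}$. Your version is in fact more careful than the paper's, which does not explicitly address the Laurent-polynomial specialization issue you flag; your denominator-clearing remedy is a clean way to close that small gap.
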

\begin{proof}
Since $f(t_{1},\dots,t_{m})$ is symmetric, 
we have 
\begin{align*}
\theta_{f}\left((x_{i}-x_{j})\cdot x^{\bm{\beta}}\right)|
_{x_{i}=x_{j}}=\left(f(x_{\bm{\beta}+\bm{e}_{i}})-
f(x_{\bm{\beta}+\bm{e}_{j}})\right)|_{x_{i}=x_{j}}=0
\end{align*}
for any $1\leq i<j\leq \ell$ and a multi-index $\bm{\beta}$ 
with $|\bm{\beta}|=m-1$. We obtain 
$\theta_{f}\left((x_{i}-x_{j})\cdot x^{\bm{\beta}}\right)\in 
(x_{i}-x_{j})S$. Hence it follows from (\ref{inter}) that 
$\theta_{f}\in D^{(m)}(\AC_{\ell-1})$.
\end{proof}
For $\lambda\in\Lambda$, define operators
\begin{align*}
\theta^{\AC}_{\lambda}:=\sum_{|\bm{\alpha}|=m}
s^{\AC}_{\lambda}\left(x_{\bm{\alpha}}\right)\frac{1}{\bm{\alpha}!}
\partial^{\bm{\alpha}},\quad
\theta^{\BC}_{\lambda}:=\sum_{|\bm{\alpha}|=m}
s^{\BC}_{\lambda}\left(x_{\bm{\alpha}}\right)\frac{1}{\bm{\alpha}!}
\partial^{\bm{\alpha}}. 
\end{align*}
Then 
$\deg \theta^{\AC}_{\lambda}=|\lambda|,
\deg \theta^{\BC}_{\lambda}=2|\lambda|+m$ 
by the formula (\ref{degs}).
\begin{Prop}\label{ope-ab2}
For $\lambda\in\Lambda$, we have 
$\theta^{\AC}_{\lambda}\in D^{(m)}(\AC_{\ell-1})$ and 
$\theta^{\BC}_{\lambda}\in D^{(m)}(\BC_{\ell})$.
\end{Prop}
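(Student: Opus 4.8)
The plan is to verify the two membership claims separately, using the intersection formula (\ref{inter}) together with the behaviour of the Schur-type polynomials under specialisation. For type $A$, I would first observe that $\theta^{\AC}_{\lambda}=\theta_{f}$ with $f=s^{\AC}_{\lambda}$, and that $s^{\AC}_{\lambda}(t_{1},\dots,t_{m})$ is a genuine symmetric polynomial (it is the Schur polynomial of the partition $\lambda$), so in particular $s^{\AC}_{\lambda}(x_{\bm{\alpha}})\in S$ for every $\bm{\alpha}$ with $|\bm{\alpha}|=m$. Then $\theta^{\AC}_{\lambda}\in D^{(m)}(\AC_{\ell-1})$ is immediate from Lemma~\ref{symm}.

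For type $B$ the argument is a touch more delicate because $s^{\BC}_{\lambda}$ carries the extra factor $t_{1}\cdots t_{m}$, hence is not symmetric but only antisymmetric-up-to-sign in a controlled way; still, from (\ref{exprB}) we have $s^{\BC}_{\lambda}(t_{1},\dots,t_{m})=t_{1}\cdots t_{m}\,s^{\AC}_{\lambda}(t_{1}^{2},\dots,t_{m}^{2})$, which shows first that $s^{\BC}_{\lambda}(x_{\bm{\alpha}})\in S$ (it is a polynomial in the $x_{i}$), so $\theta^{\BC}_{\lambda}$ is a well-defined element of $D^{(m)}(S)$. By (\ref{inter}) it suffices to check membership in $D^{(m)}(x_{i}S)$ for each $i=1,\dots,\ell$ and in $D^{(m)}((x_{i}^{2}-x_{j}^{2})S)$ for each $1\le i<j\le\ell$. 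For the hyperplane $x_{i}=0$: evaluating $\theta^{\BC}_{\lambda}(x_{i}x^{\bm\beta})$ with $|\bm\beta|=m-1$ produces $s^{\BC}_{\lambda}(x_{\bm\beta+\bm e_{i}})$, and because every monomial of $s^{\BC}_{\lambda}$ contains the factor $t_{1}\cdots t_{m}$, the specialised value is divisible by $x_{i}$ whenever $x_{i}$ actually appears among the arguments $x_{\bm\beta+\bm e_{i}}$; since $\bm\beta+\bm e_{i}$ has a positive $i$-th entry, $x_{i}$ does appear, so $\theta^{\BC}_{\lambda}(x_{i}x^{\bm\beta})\in x_{i}S$. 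For the hyperplane $x_{i}=x_{j}$ (and similarly $x_{i}=-x_{j}$): one computes $\theta^{\BC}_{\lambda}((x_{i}-x_{j})x^{\bm\beta})|_{x_{i}=x_{j}}=\bigl(s^{\BC}_{\lambda}(x_{\bm\beta+\bm e_{i}})-s^{\BC}_{\lambda}(x_{\bm\beta+\bm e_{j}})\bigr)|_{x_{i}=x_{j}}$, and using $s^{\BC}_{\lambda}(t_{1},\dots,t_{m})=t_{1}\cdots t_{m}\,s^{\AC}_{\lambda}(t_{1}^{2},\dots,t_{m}^{2})$ together with the symmetry of $s^{\AC}_{\lambda}$ shows this difference vanishes when $x_{i}=x_{j}$; the same computation with $x_{i}=-x_{j}$ works because replacing $t\mapsto t^{2}$ kills the sign, so we get divisibility by both $x_{i}-x_{j}$ and $x_{i}+x_{j}$, hence by $x_{i}^{2}-x_{j}^{2}$.

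Concretely I would organise the write-up as: (i) note $\theta^{\AC}_{\lambda}=\theta_{s^{\AC}_{\lambda}}$ and cite Lemma~\ref{symm}; (ii) for type $B$, record that $s^{\BC}_{\lambda}(x_{\bm\alpha})\in S$ via (\ref{exprB}); (iii) check the $x_{i}S$ condition using the $t_{1}\cdots t_{m}$ prefactor; (iv) check the $(x_{i}^{2}-x_{j}^{2})S$ condition by splitting into the $x_{i}=x_{j}$ and $x_{i}=-x_{j}$ specialisations and invoking symmetry of $s^{\AC}_{\lambda}$ in the squared variables; (v) conclude by (\ref{inter}).

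The main obstacle — really the only subtle point — is the type $B$ divisibility by $x_{i}^{2}-x_{j}^{2}$: one must be careful that the two linear factors $x_{i}\pm x_{j}$ both divide the relevant difference, which is exactly where the passage through $s^{\AC}_{\lambda}(t_{1}^{2},\dots,t_{m}^{2})$ is used, since an even function of $t_{i},t_{j}$ that is also symmetric under $t_{i}\leftrightarrow t_{j}$ in the squared arguments automatically has the required vanishing at both $x_{i}=x_{j}$ and $x_{i}=-x_{j}$. Everything else reduces to the same elementary $\tfrac{1}{\bm\alpha!}\partial^{\bm\alpha}$-bookkeeping already used in the proof of Proposition~\ref{ope-ab1}.
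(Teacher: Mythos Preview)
Your approach is correct and matches the paper's proof closely, but you make one misstatement that leads to unnecessary work: $s^{\BC}_{\lambda}$ \emph{is} symmetric. The prefactor $t_{1}\cdots t_{m}$ is the $m$-th elementary symmetric polynomial, and $s^{\AC}_{\lambda}(t_{1}^{2},\dots,t_{m}^{2})$ is symmetric because $s^{\AC}_{\lambda}$ is; hence their product $s^{\BC}_{\lambda}$ is symmetric in $t_{1},\dots,t_{m}$. The paper exploits exactly this: since $s^{\BC}_{\lambda}$ is symmetric, Lemma~\ref{symm} applies directly and yields $\theta^{\BC}_{\lambda}\in D^{(m)}(\AC_{\ell-1})$, disposing of all the hyperplanes $x_{i}-x_{j}=0$ at once. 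After that only the hyperplanes $x_{i}=0$ and $x_{i}+x_{j}=0$ remain, and the paper handles them by precisely the specialisation computations you describe. Your separate check of the $x_{i}=x_{j}$ case is valid but redundant; apart from this the two proofs coincide.
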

\begin{proof}
Since Laurent polynomials $s^{\AC}_{\lambda}$ and 
$s^{\BC}_{\lambda}$ are symmetric, we obtain 
$\theta^{\AC}_{\lambda},\theta^{\BC}_{\lambda}\in
 D^{(m)}(\AC_{\ell-1})$ by Lemma \ref{symm}.

By (\ref{inter}), we can write
$$
D^{(m)}(\BC_{\ell})=D^{(m)}(\AC_{\ell-1})\cap
\left(\bigcap_{i=1}^{\ell}D^{(m)}(x_{i}S)\right)
\cap\left(\bigcap_{1\leq i<j\leq\ell}
D^{(m)}\left((x_{i}+x_{j})S\right)\right).
$$
Thus we only need to prove that 
$$
\theta^{\BC}_{\lambda}\in
\left(\bigcap_{i=1}^{\ell}D^{(m)}(x_{i}S)\right)
\quad{\rm and}\quad
\theta^{\BC}_{\lambda}\in\left(\bigcap_{1\leq i<j\leq\ell}
D^{(m)}\left((x_{i}+x_{j})S\right)\right). 
$$

For any $i=1,\dots,\ell$ and a multi-index $\bm{\beta}$ 
with $|\bm{\beta}|=m-1$, we have 
\begin{align*}
\theta^{\BC}_{\lambda}(x_{i}\cdot x^{\bm{\beta}})=
s^{\BC}_{\lambda}(x_{\bm{\beta}+\bm{e}_{i}})
=x_{i}\cdot x^{\bm{\beta}}
s^{\AC}_{\lambda}(x_{\bm{\beta}+\bm{e}_{i}}^2)
\in x_{i}S.
\end{align*}
This implies 
$\bigcap_{i=1}^{\ell}\theta^{\BC}_{\lambda}\in D^{(m)}(x_{i}S)$. 
 
For any $1\leq i<j\leq \ell$ and a multi-index $\bm{\beta}$ 
with $|\bm{\beta}|=m-1$, 
\begin{align*}
\theta^{\BC}_{\lambda}((x_{i}+x_{j})\cdot x^{\bm{\beta}})=
s^{\BC}_{\lambda}(x_{\bm{\beta}+\bm{e}_{i}})
+s^{\BC}_{\lambda}(x_{\bm{\beta}+\bm{e}_{j}})
=x^{\bm{\beta}}\left(
x_{i}s^{\AC}_{\lambda}(x_{\bm{\beta}+\bm{e}_{i}}^2)
+x_{j}s^{\AC}_{\lambda}(x_{\bm{\beta}+\bm{e}_{j}}^2)
\right)
\end{align*}
Then we have 
$\theta^{\BC}_{\lambda}((x_{i}+x_{j})\cdot x^{\bm{\beta}})
|_{x_{i}=-x_{j}}=0$, and this implies 
$\theta^{\BC}_{\lambda}\left((x_{i}+x_{j})\cdot 
x^{\bm{\beta}}\right)\in (x_{i}+x_{j})S$.
Hence we obtain 
$\theta^{\BC}_{\lambda}\in D^{(m)}(\BC_{\ell})$.
\end{proof}
\begin{Them}\label{basisAB}
Let $m=2$.
\begin{enumerate}
\item[$(1)$]The set 
\begin{align*}
C_{\AC}&:=\left\{\eta^{\AC}_{i}\mid i=1,\dots\ell\right\}\cup
\left\{\theta^{\AC}_{\lambda}\mid\lambda\in\Lambda\right\}
\end{align*}
forms an $S$-basis for $D^{(2)}(\AC_{\ell-1})$. Hence 
$$
\exp D^{(2)}(\AC_{\ell-1})=\{\ell-1,\dots,\ell-1\}
\cup\{|\lambda|\mid\lambda\in\Lambda\}.
$$
\item[$(2)$]The set 
\begin{align*}
C_{\BC}&:=\left\{\eta^{\BC}_{i}\mid i=1,\dots\ell\right\}\cup
\left\{\theta^{\BC}_{\lambda}\mid\lambda\in\Lambda\right\}
\end{align*}
forms an $S$-basis for $D^{(2)}(\BC_{\ell})$. Hence 
$$
\exp D^{(2)}(\BC_{\ell})=\{2\ell-1,\dots,2\ell-1\}
\cup\{2|\lambda|+2\mid\lambda\in\Lambda\}.
$$
\end{enumerate}
\end{Them}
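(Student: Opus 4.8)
The plan is to verify, in each case, condition~(1) of Saito--Holm's criterion (Proposition~\ref{Saito-Holm}) with $m=2$, which by that proposition is equivalent to the required basis statement~(2). First I would settle the bookkeeping. For $m=2$ one has $s_{2}=\binom{\ell+1}{2}$ and $t_{2}=\ell$, while $\Lambda=\{(\lambda_{1},\lambda_{2})\mid \ell-2\geq\lambda_{1}\geq\lambda_{2}\geq0\}$ has $\binom{\ell}{2}$ elements, so $|C_{\AC}|=|C_{\BC}|=\ell+\binom{\ell}{2}=s_{2}$, i.e.\ the right number of operators. Homogeneity of all the operators, with $\deg\eta^{\AC}_{k}=\ell-1$, $\deg\theta^{\AC}_{\lambda}=|\lambda|$, $\deg\eta^{\BC}_{k}=2\ell-1$, $\deg\theta^{\BC}_{\lambda}=2|\lambda|+2$, and their membership in $D^{(2)}(\AC_{\ell-1})$ resp.\ $D^{(2)}(\BC_{\ell})$ are already supplied by Propositions~\ref{ope-ab1} and~\ref{ope-ab2}. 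It then remains to show $\det M_{2}\doteq Q(\AC_{\ell-1})^{\ell}$ resp.\ $\det M_{2}\doteq Q(\BC_{\ell})^{\ell}$, after which Proposition~\ref{Saito-Holm} gives both the basis assertion and, reading off the degrees, the stated exponents.

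The key step is to exploit the block structure of the coefficient matrix. The multi-indices $\bm{\alpha}$ with $|\bm{\alpha}|=2$ are the ``square'' ones $2\bm{e}_{k}$ ($k=1,\dots,\ell$) together with the ``mixed'' ones $\bm{e}_{i}+\bm{e}_{j}$ ($1\leq i<j\leq\ell$), and the latter are in order-preserving bijection with $Z$ via $(i,j)\mapsto\mu=(i,j)$, with $x_{\bm{e}_{i}+\bm{e}_{j}}=x_{\mu}$. Since $\eta^{\AC}_{k}=h^{\AC}_{k}\frac{1}{2}\partial_{k}^{2}$, the row of $M_{2}$ indexed by $\eta^{\AC}_{k}$ has its only nonzero entry, a nonzero scalar multiple of $h^{\AC}_{k}$, in the column indexed by $2\bm{e}_{k}$ (and likewise $\eta^{\BC}_{k}$ contributes $h^{\BC}_{k}$ in the column $2\bm{e}_{k}$); the row indexed by $\theta^{\AC}_{\lambda}$ (resp.\ $\theta^{\BC}_{\lambda}$) has, in column $\bm{\alpha}$, the entry $s^{\AC}_{\lambda}(x_{\bm{\alpha}})/\bm{\alpha}!$ (resp.\ $s^{\BC}_{\lambda}(x_{\bm{\alpha}})/\bm{\alpha}!$). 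Factoring the constant $1/\bm{\alpha}!$ out of each column and expanding the determinant by Laplace along the $\ell$ rows $\eta^{\AC}_{1},\dots,\eta^{\AC}_{\ell}$ leaves exactly one nonzero term, so
\[
\det M_{2}(\eta^{\AC}_{1},\dots,\eta^{\AC}_{\ell},\theta^{\AC}_{\lambda}:\lambda\in\Lambda)\ \doteq\ \Bigl(\prod_{k=1}^{\ell}h^{\AC}_{k}\Bigr)\cdot\det\bigl(s^{\AC}_{\lambda}(x_{\mu})\bigr)_{\lambda\in\Lambda,\ \mu\in Z},
\]
and the same identity with every $\AC$ replaced by $\BC$; in both cases the remaining minor is precisely the determinant evaluated in Proposition~\ref{det}.

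Finally I would assemble the pieces. In type $A$, formula~(\ref{detsa}) gives $\det(s^{\AC}_{\lambda}(x_{\mu}))\doteq\bigl[\prod_{i<j}(x_{i}-x_{j})\bigr]^{\ell-2}=Q(\AC_{\ell-1})^{\ell-2}$, while $\prod_{k}h^{\AC}_{k}=\prod_{i\neq j}(x_{i}-x_{j})\doteq\bigl[\prod_{i<j}(x_{i}-x_{j})\bigr]^{2}=Q(\AC_{\ell-1})^{2}$, whence $\det M_{2}\doteq Q(\AC_{\ell-1})^{\ell}=Q(\AC_{\ell-1})^{t_{2}}$. In type $B$, formula~(\ref{detsb}) gives $\det(s^{\BC}_{\lambda}(x_{\mu}))\doteq(x_{1}\cdots x_{\ell})^{\ell-1}\bigl[\prod_{i<j}(x_{i}^{2}-x_{j}^{2})\bigr]^{\ell-2}$, and $\prod_{k}h^{\BC}_{k}=(x_{1}\cdots x_{\ell})\prod_{i\neq j}(x_{i}^{2}-x_{j}^{2})\doteq(x_{1}\cdots x_{\ell})\bigl[\prod_{i<j}(x_{i}^{2}-x_{j}^{2})\bigr]^{2}$, whence $\det M_{2}\doteq(x_{1}\cdots x_{\ell})^{\ell}\bigl[\prod_{i<j}(x_{i}^{2}-x_{j}^{2})\bigr]^{\ell}=Q(\BC_{\ell})^{\ell}$. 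All the suppressed scalars (the $1/\bm{\alpha}!$ normalisations, the Laplace sign, and the sign in $\prod_{i\neq j}=\pm\prod_{i<j}(\cdot)^{2}$) are nonzero reals, so condition~(1) of Proposition~\ref{Saito-Holm} holds and the theorem follows. In truth the substantive content sits in Proposition~\ref{det} (hence in Cauchy--Sylvester's theorem); within the present argument the only step demanding genuine care is matching the complementary minor to the Schur-type determinant of Proposition~\ref{det} in the correct row/column indexing, the count $|C_{\AC}|=|C_{\BC}|=s_{2}$ and the degree arithmetic being routine.
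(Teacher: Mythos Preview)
Your proposal is correct and follows essentially the same route as the paper: invoke Propositions~\ref{ope-ab1} and~\ref{ope-ab2} for membership, observe the block structure of $M_{2}$ coming from the fact that each $\eta_{k}$ contributes only in the column $2\bm{e}_{k}$, identify the complementary minor with the Schur-type determinant of Proposition~\ref{det}, and combine $\prod_{k}h_{k}\doteq Q^{2}$ with the Cauchy--Sylvester identity to obtain $\det M_{2}\doteq Q^{\ell}$ so that Saito--Holm's criterion applies. Your write-up is in fact slightly more explicit than the paper's (you spell out the cardinality check $|C_{\AC}|=s_{2}$ and the bijection between mixed multi-indices and $Z$), but the argument is the same.
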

\begin{proof}
$(1)$ All operators in $C_{\AC}$ belong to $D^{(2)}(\AC_{\ell-1})$ 
by Proposition \ref{ope-ab1} and Proposition \ref{ope-ab2}. 

By Proposition \ref{Saito-Holm}, we only need to prove that 
the determinant of the coefficient matrix 
$M_{m}(C_{\AC})$ of the operators of $C_{\AC}$ is equal to 
$Q(\AC_{\ell-1})^{\ell}$ up to a nonzero constant. 
By Proposition \ref{det}, we obtain 
$\det\left(s^{\AC}_{\lambda}(x_{\bm{\alpha}})\right)
_{\lambda\in\Lambda,\bm{\alpha}\in Z}=Q(\AC)^{\ell-2}$. 
Hence we have 
\begin{align*}
\det M_{m}(C_{\AC})\doteq Q(\AC_{\ell-1})^{2}
\begin{vmatrix}
I_{\ell}&\ast\\
0&\det\left(s^{\AC}_{\lambda}(x_{\bm{\alpha}})\right)
_{\substack{\lambda\in\Lambda\\ \bm{\alpha}\in Z}}
\end{vmatrix}
=Q(\AC_{\ell-1})^{2}\cdot Q(\AC_{\ell-1})^{\ell-2}=Q(\AC_{\ell-1})^{\ell}.
\end{align*}

$(2)$ We have an identity 
\begin{align*}
\det M_{m}(C_{\BC})\doteq 
x_{1}\cdots x_{\ell}\left(\prod_{1\leq i<j\leq\ell}
(x_{i}^2-x_{j}^2)\right)^2
\begin{vmatrix}
I_{\ell}&\ast\\
0&\det\left(s^{\BC}_{\lambda}(x_{\bm{\alpha}})\right)
_{\substack{\lambda\in\Lambda\\ \bm{\alpha}\in Z}}
\end{vmatrix}
=Q(\BC_{\ell})^{\ell}
\end{align*}
by Proposition \ref{det}. Then the rest of proof 
for $(2)$ is similar to the one for $(1)$.
\end{proof}
\begin{Examp}
Let $\ell=3, m=2$. Then we have 
$s_{2}=\binom{3+2-1}{2}=6$ and $t_{2}=\binom{3+2-2}{2-1}=3$. 
List the operators of the set $C_{\AC}$:
\begin{align*}
&\eta^{\AC}_{1}=(x_{1}-x_{2})(x_{1}-x_{3})\frac{1}{2}\partial_{1}^2,\\
&\eta^{\AC}_{2}=(x_{2}-x_{1})(x_{2}-x_{3})\frac{1}{2}\partial_{2}^2,\\
&\eta^{\AC}_{3}=(x_{3}-x_{1})(x_{3}-x_{2})\frac{1}{2}\partial_{3}^2,\\
&\theta^{\AC}_{(1,1)}=x_{1}^2\frac{1}{2}\partial_{1}^2+
x_{2}^2\frac{1}{2}\partial_{2}^2+x_{3}^2\frac{1}{2}\partial_{3}^2+
x_{1}x_{2}\partial_{1}\partial_{2}+x_{1}x_{3}\partial_{1}\partial_{3}+
x_{2}x_{3}\partial_{2}\partial_{3},\\
&\theta^{\AC}_{(1,0)}=2x_{1}\frac{1}{2}\partial_{1}^2+
2x_{2}\frac{1}{2}\partial_{2}^2+2x_{3}\frac{1}{2}\partial_{3}^2+
(x_{1}+x_{2})\partial_{1}\partial_{2}+(x_{1}+x_{3})\partial_{1}\partial_{3}+
(x_{2}+x_{3})\partial_{2}\partial_{3},\\
&\theta^{\AC}_{(0,0)}=\frac{1}{2}\partial_{1}^2+
\frac{1}{2}\partial_{2}^2+\frac{1}{2}\partial_{3}^2+
\partial_{1}\partial_{2}+\partial_{1}\partial_{3}+
\partial_{2}\partial_{3}.
\end{align*}
Hence the determinant of the coefficient matrix of operators above is 
\begin{align*}
&\det M_{2}\left(\eta^{\AC}_{1},\eta^{\AC}_{2},\eta^{\AC}_{3},
\theta^{\AC}_{(1,1)},\theta^{\AC}_{(1,0)},
\theta^{\AC}_{(0,0)}\right)\\
=&
\begin{vmatrix}
(x_{1}-x_{2})(x_{1}-x_{3})&0&0&\frac{1}{2}x_{1}^2&x_{1}&\frac{1}{2}\\
0&(x_{2}-x_{1})(x_{2}-x_{3})&0&\frac{1}{2}x_{2}^2&x_{2}&\frac{1}{2}\\
0&0&(x_{3}-x_{1})(x_{3}-x_{2})&\frac{1}{2}x_{3}^2&x_{2}&\frac{1}{2}\\
0&0&0&x_{1}x_{2}&x_{1}+x_{2}&1\\
0&0&0&x_{1}x_{3}&x_{1}+x_{3}&1\\
0&0&0&x_{2}x_{3}&x_{2}+x_{3}&1
\end{vmatrix}
\\
=&-(x_{1}-x_{2})^2(x_{1}-x_{3})^2(x_{2}-x_{3})^2
\begin{vmatrix}
x_{1}x_{2}&x_{1}+x_{2}&1\\
x_{1}x_{3}&x_{1}+x_{3}&1\\
x_{2}x_{3}&x_{2}+x_{3}&1
\end{vmatrix}
\\
\doteq& Q(\AC_{2})^{3}.
\end{align*}
\end{Examp}
\section{Type $D$}\label{D}
In this section, we assume $m=2$, and we 
construct a basis for $D^{(2)}(\DC_{\ell})$. 
Recall the defining polynomial 
$Q(\DC_{\ell})=\prod_{1\leq i<j\leq\ell}(x_{i}^2-x_{j}^2)$
of the Coxeter arrangement of type $D$. 

Set 
\begin{align*}
\Lambda^{'}&:=\left\{\lambda=(\lambda_{1},
\lambda_{2})\mid\ell-2\geq\lambda_{1}\geq
\lambda_{2}\geq1\right\},\\
\Lambda^{''}&:=\left\{\lambda=(\lambda_{1},
\lambda_{2})\mid\ell-2\geq\lambda_{1}\geq 0,
\lambda_{2}=0\right\}.
\end{align*}
%\begin{align*}
%\Lambda^{'}&:=\left\{\lambda=(\lambda_{1},
%\dots,\lambda_{m})\mid1\leq\lambda_{m}\leq
%\cdots\leq\lambda_{1}\leq\ell-m\right\},\\
%\Lambda^{''}&:=\left\{\lambda=(\lambda_{1},
%\dots,\lambda_{m})\mid\lambda_{m}=0,
%0\leq\lambda_{m-1}\leq
%\cdots\leq\lambda_{1}\leq\ell-m\right\}.
%\end{align*}
Then $\Lambda=\Lambda^{'}\cup\Lambda^{''}$. 
Put $\lambda^{(0)}:=(0,0)$. 
We define operators $\theta^{\DC}_{\lambda}$ as follows:
\begin{align}
\theta^{\DC}_{\lambda}&:=\sum_{|\bm{\alpha}|=2}
s^{\DC}_{\lambda}\left(x_{\bm{\alpha}}\right)\frac{1}{\bm{\alpha}!}
\partial^{\bm{\alpha}}
\quad{\rm if}\ \lambda\in\Lambda^{'},\label{thetaD1}\\
\theta^{\DC}_{\lambda}&:=(x_{1}\cdots x_{\ell})
\sum_{|\bm{\alpha}|=2}s^{\DC}_{\lambda}\left(x_{\bm{\alpha}}\right)
\frac{1}{\bm{\alpha}!}\partial^{\bm{\alpha}}\quad
{\rm if}\ \lambda\in\Lambda^{''}\setminus\{\lambda^{(0)}\},\label{thetaD2}\\
\theta^{\DC}_{\lambda}&:=(x_{1}\cdots x_{\ell})^2
\sum_{|\bm{\alpha}|=2}s^{\DC}_{\lambda}\left(x_{\bm{\alpha}}\right)
\frac{1}{\bm{\alpha}!}\partial^{\bm{\alpha}}
\quad{\rm if}\ \lambda=\lambda^{(0)}.\label{thetaD3}
\end{align}
If $\lambda\in\Lambda^{'}$, then we have 
\begin{align*}
s^{\DC}_{\lambda}=
\frac{\det(t_{i}^{2(\lambda_{j}-1+2-j)+1})_{1\leq i,j\leq 2}}
{\det(t_{i}^{2(2-j)})_{1\leq i,j\leq 2}}
=s^{\BC}_{\lambda-\bm{1}},%\label{s^Dtos^B}
\end{align*}
where $\lambda-\bm{1}=(\lambda_{1}-1,\lambda_{2}-1)$. 

If $\lambda\in\Lambda^{''}\setminus\{\lambda^{(0)}\}$, then 
$$
s^{\DC}_{\lambda}=\frac{t_{1}^{2\lambda_{1}+1}\cdot t_{2}^{-1}
-t_{2}^{2\lambda_{1}+1}\cdot t_{1}^{-1}}{t_{1}^2-t_{2}^2}=
\frac{1}{t_{1}t_{2}}
\sum_{j=0}^{\lambda_{1}}t_{1}^{2j} t_{2}^{2(\lambda_{1}-j)}.
$$
Thus 
$(x_{1}\cdots x_{\ell})s^{\DC}_{\lambda}\left(x_{\bm{\alpha}}\right)$ is 
a polynomial for any multi-index $\bm{\alpha}$ 
with $|\bm{\alpha}|=2$. 

We have 
$$
\theta^{\DC}_{\lambda^{(0)}}=(x_{1}\cdots x_{\ell})^2
\left(\sum_{i=1}^{\ell}\frac{1}{2x_{i}^2}\partial_{i}^2
+\sum_{1\leq i<j\leq \ell}\frac{1}{x_{i}x_{j}}
\partial_{i}\partial_{j}\right).
$$

Hence $\theta^{\DC}_{\lambda}$ for any $\lambda\in\Lambda$. 
The degrees of these operators are as follows:
\begin{align*}
\deg\theta^{\DC}_{\lambda}&=2|\lambda|-2
=2\lambda_{1}+2\lambda_{2}-2
\quad{\rm if}\quad\lambda\in\Lambda^{'},\\
\deg\theta^{\DC}_{\lambda}&=2\lambda_{1}-2+\ell
\quad{\rm if}\quad\lambda\in\Lambda^{''}\setminus
\{\lambda^{(0)}\},\\
\deg\theta^{\DC}_{\lambda}&=2\ell-2
\quad{\rm if}\quad\lambda=\lambda^{(0)}.
\end{align*}
\begin{Prop}\label{ope-d1}
For $\lambda\in\Lambda$, 
we have $\theta^{\DC}_{\lambda}\in D^{(2)}(\DC_{\ell})$.
\end{Prop}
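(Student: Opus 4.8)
The plan is to verify membership hyperplane by hyperplane using the intersection formula (\ref{inter}), exactly as in the proofs of Proposition \ref{ope-ab1} and Proposition \ref{ope-ab2}. Since $Q(\DC_{\ell})=\prod_{1\leq i<j\leq\ell}(x_i^2-x_j^2)$, formula (\ref{inter}) gives
$$
D^{(2)}(\DC_{\ell})=\left(\bigcap_{1\leq i<j\leq\ell}D^{(2)}\left((x_i-x_j)S\right)\right)\cap\left(\bigcap_{1\leq i<j\leq\ell}D^{(2)}\left((x_i+x_j)S\right)\right),
$$
so for each operator $\theta^{\DC}_{\lambda}$ and each multi-index $\bm\beta$ with $|\bm\beta|=1$ I must check that $\theta^{\DC}_{\lambda}\left((x_i\pm x_j)x^{\bm\beta}\right)$ lies in $(x_i\pm x_j)S$, equivalently vanishes on setting $x_i=\mp x_j$.

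First I would treat the generic case $\lambda\in\Lambda'$, where $\theta^{\DC}_{\lambda}=\theta_{s^{\DC}_{\lambda}}$ with $s^{\DC}_{\lambda}$ a genuine symmetric polynomial (the excerpt already records $s^{\DC}_{\lambda}=s^{\BC}_{\lambda-\bm 1}$ when $\lambda_2\geq1$). Symmetry of $s^{\DC}_{\lambda}$ gives the vanishing at $x_i=x_j$ by the same computation as in Lemma \ref{symm}, so $\theta^{\DC}_{\lambda}\in D^{(2)}(\AC_{\ell-1})$. For the hyperplanes $x_i+x_j=0$, I would use the expression $s^{\DC}_{\lambda}(t_1,t_2)=t_1t_2\,s^{\AC}_{\lambda-\bm1}(t_1^2,t_2^2)$ from (\ref{exprB}); then $\theta^{\DC}_{\lambda}\left((x_i+x_j)x^{\bm\beta}\right)=x^{\bm\beta}\bigl(x_i s^{\DC}_{\lambda}(x_{\bm\beta+\bm e_i})/x_i\cdot(\text{adjust})\bigr)$—more carefully, writing $s^{\DC}_{\lambda}(x_{\bm\beta+\bm e_i})$ and $s^{\DC}_{\lambda}(x_{\bm\beta+\bm e_j})$ out via the $t_1t_2\,s^{\AC}(t_1^2,t_2^2)$ form shows their sum is $x^{\bm\beta}\bigl(x_i g(x_i^2,\dots)+x_j g(x_j^2,\dots)\bigr)$ for an appropriate polynomial $g$, which vanishes at $x_i=-x_j$ exactly as in Proposition \ref{ope-ab2}. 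This handles $\Lambda'$.

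Next I would handle $\lambda\in\Lambda''\setminus\{\lambda^{(0)}\}$, where $\theta^{\DC}_{\lambda}=(x_1\cdots x_\ell)\theta_{s^{\DC}_{\lambda}}$ and $s^{\DC}_{\lambda}(t_1,t_2)=\frac{1}{t_1t_2}\sum_{j=0}^{\lambda_1}t_1^{2j}t_2^{2(\lambda_1-j)}$. Here $s^{\DC}_{\lambda}$ is still symmetric, so the $x_i-x_j$ hyperplanes are immediate (the prefactor $x_1\cdots x_\ell$ is itself a symmetric polynomial multiplier and preserves membership in $D^{(2)}(\AC_{\ell-1})$). For $x_i+x_j=0$: evaluating $(x_1\cdots x_\ell)s^{\DC}_{\lambda}$ at the two relevant multi-indices, the factor $x_1\cdots x_\ell$ supplies the needed $x_ix_j$ to clear denominators, leaving $x^{\bm\beta}(x_1\cdots\widehat{x_i}\cdots\widehat{x_j}\cdots x_\ell)\bigl(x_j\sum_k x_i^{2k}x_j^{2(\lambda_1-k)}\cdot(\cdot)+\dots\bigr)$; I would just track the parity and show the result is a polynomial antisymmetric under $x_i\mapsto-x_i$ up to the even remaining factors, hence divisible by $x_i+x_j$. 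Finally, $\lambda=\lambda^{(0)}$ is the explicit operator $\theta^{\DC}_{\lambda^{(0)}}=(x_1\cdots x_\ell)^2\bigl(\sum_i\frac{1}{2x_i^2}\partial_i^2+\sum_{i<j}\frac{1}{x_ix_j}\partial_i\partial_j\bigr)$; applying it to $(x_i\pm x_j)x^{\bm\beta}$ is a short direct computation—the $(x_i x_j)^2$ part of the prefactor clears all denominators and one checks the numerator vanishes on $x_i=\mp x_j$.

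The main obstacle is bookkeeping rather than conceptual: the operators in $\Lambda''$ and $\lambda^{(0)}$ are not of the clean form $\theta_f$ with $f$ symmetric, because a power of $x_1\cdots x_\ell$ has been multiplied in to clear the Laurent denominators, and this prefactor is not invariant under the sign changes $x_i\mapsto-x_i$ that generate the extra reflections of type $D$. So I cannot simply invoke Lemma \ref{symm}; I must carefully verify that after multiplying by $x_1\cdots x_\ell$ (or its square) the resulting genuine polynomial coefficients still satisfy the divisibility $\theta\left((x_i+x_j)x^{\bm\beta}\right)\in(x_i+x_j)S$, doing the sign-parity accounting hyperplane by hyperplane. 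I expect this to be the only real work; once membership is established the freeness of $D^{(2)}(\DC_\ell)$ will follow from Proposition \ref{Saito-Holm} together with the determinant identity (\ref{detsd}) in Proposition \ref{det}, with the extra $x_1\cdots x_\ell$ prefactors contributing exactly the power of $Q(\DC_\ell)$ needed.
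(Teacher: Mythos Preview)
Your approach is correct and is essentially the paper's own: use (\ref{inter}) to reduce to the hyperplanes $x_i\pm x_j$, invoke Lemma \ref{symm} for the $x_i-x_j$ part (the coefficient functions, prefactor included, are symmetric Laurent polynomials in the sense required there), and then verify divisibility by $x_i+x_j$ via the Schur-polynomial expression. The only difference is that the paper avoids your three-way case split: it computes once with the \emph{bare} operator $\sum_{|\bm\alpha|=2}s^{\DC}_{\lambda}(x_{\bm\alpha})\frac{1}{\bm\alpha!}\partial^{\bm\alpha}$ using (\ref{exprD}) uniformly for all $\lambda\in\Lambda$, obtaining
\[
\bigl(\text{bare operator}\bigr)\bigl((x_i+x_j)x_k\bigr)=\frac{1}{x_ix_jx_k}\Bigl(x_j\,s^{\AC}_{\lambda}(x_i^2,x_k^2)+x_i\,s^{\AC}_{\lambda}(x_j^2,x_k^2)\Bigr),
\]
whose numerator visibly vanishes at $x_i=-x_j$ while the denominator is coprime to $x_i+x_j$; since $\theta^{\DC}_{\lambda}\bigl((x_i+x_j)x_k\bigr)$ is a genuine polynomial (being a polynomial multiple of the above rational function), it is divisible by $x_i+x_j$. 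The prefactors $(x_1\cdots x_\ell)^{0,1,2}$ thus play no role in the divisibility argument, so your anticipated ``sign-parity accounting hyperplane by hyperplane'' is unnecessary.
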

\begin{proof}
By Lemma \ref{symm}, we have 
$\theta^{\DC}_{\lambda}\in D^{(2)}(\AC_{\ell-1})$ for any 
$\lambda\in\Lambda$. 

Since 
\begin{align*}
&\left(\sum_{|\bm{\alpha}|=2}
s^{\DC}_{\lambda}\left(x_{\bm{\alpha}}\right)\frac{1}{\bm{\alpha}!}
\partial^{\bm{\alpha}}\right)\left((x_{i}+x_{j})\cdot x_{k}\right)\\
&=s^{\DC}_{\lambda}(x_{i},x_{k})+s^{\DC}_{\lambda}(x_{j},x_{k})\\
&=\frac{1}{x_{i}x_{k}}s^{\AC}_{\lambda}(x_{i}^2,x_{k}^2)+
\frac{1}{x_{j}x_{k}}s^{\AC}_{\lambda}(x_{j}^2,x_{k}^2)\\
&=\frac{1}{x_{i}x_{j}x_{k}}\left(x_{j}s^{\AC}_{\lambda}(x_{i}^2,x_{k}^2)+
x_{i}s^{\AC}_{\lambda}(x_{j}^2,x_{k}^2)\right),
\end{align*}
we obtain 
$\theta^{\DC}_{\lambda}((x_{i}+x_{j})x_{k})|_{x_{i}=-x_{j}}=0$ 
for $1\leq i<j\leq \ell$, $k=1,\dots,\ell$ and 
$\lambda\in\Lambda$. Hence we have 
$\theta^{\DC}_{\lambda}\in D^{(2)}(\DC)$ for any $\lambda\in\Lambda$.
\end{proof}
We introduce other operators $h^{\DC}_{k}$ of $D^{(2)}(\DC_{\ell})$. 
For $k=1,\dots,\ell$ put 
$h^{\DC}_{k}:=(x_{k}^2-x_{1}^2)\cdots(x_{k}^2-x_{k-1}^2)
(x_{k}^2-x_{k+1}^2)\cdots(x_{k}^2-x_{\ell}^2)$, 
and define 
$$
\eta^{\DC}_{k}:=\frac{h^{\DC}_{k}}{2x_{k}}\partial_{k}^2
-(-1)^{\ell-1}\frac{1}{x_{k}}\theta^{\DC}_{\lambda^{(0)}}.
$$
The coefficient of $\partial_{k}^2$ in $\eta^{\DC}_{k}$ is 
$$
\frac{h^{\DC}_{k}}{2x_{k}}-(-1)^{\ell-1}
\frac{(x_{1}\cdots x_{\ell})^2}{2x_{k}\cdot x_{k}^2}=
\frac{h^{\DC}_{k}-(-1)^{\ell-1}
(x_{1}\cdots x_{k-1}x_{k+1}\cdots x_{\ell})^2}{2x_{k}}
\in S.
$$
Hence we obtain $\eta^{\DC}_{k}\in D^{(2)}(S)$, 
and $\deg\eta^{\DC}_{k}=2\ell-2$.
\begin{Prop}\label{ope-d2}
For $k=1,\dots,\ell$, 
we have that $\eta^{\DC}_{k}\in D^{(2)}(\DC_{\ell})$.
\end{Prop}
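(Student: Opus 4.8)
The plan is to verify that $\eta^{\DC}_{k}$ preserves each ideal $p_{H}S$ for $H\in\DC_{\ell}$, using the characterization (\ref{inter}). Since $Q(\DC_{\ell})=\prod_{1\leq i<j\leq\ell}(x_{i}^2-x_{j}^2)$, the hyperplanes are $H_{ij}^{\pm1}=\{x_{i}\pm x_{j}=0\}$, so I must check $\eta^{\DC}_{k}\bigl((x_{i}\pm x_{j})x_{r}\bigr)\in(x_{i}\pm x_{j})S$ for all $1\leq i<j\leq\ell$ and all $r=1,\dots,\ell$. I would write $\eta^{\DC}_{k}=\frac{h^{\DC}_{k}}{2x_{k}}\partial_{k}^2-(-1)^{\ell-1}\frac{1}{x_{k}}\theta^{\DC}_{\lambda^{(0)}}$ and treat the two summands together, because neither is individually an element of $D^{(2)}(S)$ in an obvious way — only the combination has polynomial coefficients, as was checked just before the statement — so the evaluation must be done on the whole operator.

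First I would compute the action of the leading term. Applying $\frac{h^{\DC}_{k}}{2x_{k}}\partial_{k}^2$ to $(x_{i}\pm x_{j})x_{r}$ gives a nonzero result only when differentiating twice in the variable $x_{k}$ hits a degree-$2$ monomial in $x_{k}$; the only such monomials arising from $(x_{i}\pm x_{j})x_{r}$ are $x_{k}^2$ (when $i=k=r$ or $j=k=r$) and in that case $\partial_{k}^2(x_{k}^2)=2$, so the term contributes $\pm\frac{h^{\DC}_{k}}{x_{k}}$. Next, for the same inputs, $\theta^{\DC}_{\lambda^{(0)}}=(x_{1}\cdots x_{\ell})^2\bigl(\sum_{i}\frac{1}{2x_{i}^2}\partial_{i}^2+\sum_{i<j}\frac{1}{x_{i}x_{j}}\partial_{i}\partial_{j}\bigr)$ acting on $(x_{i}\pm x_{j})x_{r}$ I would compute directly; by Proposition \ref{ope-d1} this already lies in $(x_{i}\pm x_{j})S$, but I need its explicit value modulo $(x_{i}\pm x_{j})$ — equivalently its value at $x_{i}=\mp x_{j}$ — to combine with the leading term. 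The key point I expect is that on the hyperplane $x_{i}=x_{j}$ (the type-A case) both pieces already vanish separately by Lemma \ref{symm} and the symmetry of $h^{\DC}_{k}$ in the squared variables, so only the new hyperplanes $x_{i}=-x_{j}$ require genuine work.

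Restricting to $x_{i}=-x_{j}$, I would show the two contributions cancel. The leading term contributes $\pm\frac{h^{\DC}_{k}}{x_{k}}$ only when $k\in\{i,j\}$ and $r=k$; evaluating $h^{\DC}_{k}$ at $x_{i}=-x_{j}$ produces a factor $(x_{k}^2-x_{j}^2)$ or $(x_{k}^2-x_{i}^2)$ that vanishes (since $k$ equals one of $i,j$ and $x_{i}^2=x_{j}^2$ there), except — crucially — we must be careful that $h^{\DC}_{k}$ omits the index $k$ itself, so when $k=i$ the relevant factor is $(x_{i}^2-x_{j}^2)$ which is present in $h^{\DC}_{i}$ and does vanish. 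Wait: by definition $h^{\DC}_{k}=\prod_{s\neq k}(x_{k}^2-x_{s}^2)$, which includes the factor $(x_{k}^2-x_{j}^2)$ when $k=i$, and $(x_{i}^2-x_{j}^2)|_{x_{i}=-x_{j}}=0$; so the leading term already vanishes on $x_{i}=-x_{j}$. Similarly $\theta^{\DC}_{\lambda^{(0)}}((x_{i}+x_{j})x_{r})|_{x_{i}=-x_{j}}=0$ by Proposition \ref{ope-d1} (applied with the $+$ sign; the $-$ sign is the type-A case already handled). Hence $\eta^{\DC}_{k}((x_{i}\pm x_{j})x_{r})|_{x_{i}=\pm x_{j}\text{ appropriately}}=0$, which by (\ref{inter}) gives $\eta^{\DC}_{k}\in D^{(2)}(\DC_{\ell})$.

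The main obstacle will be bookkeeping in the case $x_{i}=-x_{j}$ with $k\notin\{i,j\}$: there the leading term $\frac{h^{\DC}_{k}}{2x_{k}}\partial_{k}^2$ annihilates $(x_{i}+x_{j})x_{r}$ outright (no $x_{k}^2$ appears), so the whole burden falls on the $\theta^{\DC}_{\lambda^{(0)}}$ piece — but that piece vanishes on $x_{i}=-x_{j}$ by Proposition \ref{ope-d1}, so there is in fact nothing new. The only genuinely delicate sub-case is $k\in\{i,j\}$, $r=k$, where I claimed both terms vanish; I should double-check whether there could be a surviving mismatch when $\ell$ is even versus odd — this is exactly why the sign $(-1)^{\ell-1}$ was inserted in the definition of $\eta^{\DC}_{k}$, and I would confirm that with this sign the residual contributions to $\partial_{k}^2$ and to the cross terms $\partial_{k}\partial_{s}$ cancel on every hyperplane. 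Once the restriction-to-hyperplane vanishing is verified for all the finitely many combinatorial cases, (\ref{inter}) closes the argument.
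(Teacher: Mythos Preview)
Your plan is correct and rests on the same two facts the paper uses: $\tfrac{h^{\DC}_{k}}{2}\partial_{k}^{2}\in D^{(2)}(\DC_{\ell})$ (because $h^{\DC}_{k}$ contains the factor $x_{k}^{2}-x_{s}^{2}$ for every $s\neq k$) and $\theta^{\DC}_{\lambda^{(0)}}\in D^{(2)}(\DC_{\ell})$ (Proposition~\ref{ope-d1}). The paper's packaging is shorter: rather than restricting the two Laurent pieces $\tfrac{h^{\DC}_{k}}{2x_{k}}\partial_{k}^{2}$ and $\tfrac{1}{x_{k}}\theta^{\DC}_{\lambda^{(0)}}$ to each hyperplane, it simply observes that
\[
x_{k}\,\eta^{\DC}_{k}=\tfrac{h^{\DC}_{k}}{2}\partial_{k}^{2}-(-1)^{\ell-1}\theta^{\DC}_{\lambda^{(0)}}\in D^{(2)}(\DC_{\ell}),
\]
and then, since $\eta^{\DC}_{k}\in D^{(2)}(S)$ and $x_{k}$ is coprime to every $p_{H}=x_{i}\pm x_{j}$, concludes $\eta^{\DC}_{k}\in D^{(2)}(\DC_{\ell})$. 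This is exactly your argument with the division by $x_{k}$ postponed, which spares you the case analysis in $S[x_{k}^{-1}]$.

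One correction: your worry about the sign $(-1)^{\ell-1}$ affecting the hyperplane check is misplaced. That sign is there solely to make the $\partial_{k}^{2}$-coefficient of $\eta^{\DC}_{k}$ a polynomial (the computation just before the proposition), and has no bearing on membership in $D^{(2)}(\DC_{\ell})$: on every hyperplane $x_{i}=\pm x_{j}$ the two pieces of $x_{k}\eta^{\DC}_{k}$ vanish \emph{separately}, so there is no ``residual mismatch'' to cancel.
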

\begin{proof}
Let $k=1,\dots,\ell$. It is clear that 
$\frac{h^{\DC}_{k}}{2}\partial_{k}^2\in D^{(2)}(\DC_{\ell})$ , and 
we have $\theta^{\DC}_{\lambda^{(0)}}\in D^{(2)}(\DC_{\ell})$ 
by Proposition \ref{ope-d1}. 
Thus we have $\frac{h^{\DC}_{k}}{2}\partial_{k}^2
-(-1)^{\ell-1}\theta^{\DC}_{\lambda^{(0)}}\in D^{(2)}(\DC_{\ell})$. 
This leads to $\eta^{\DC}_{k}\in D^{(2)}(\DC_{\ell})$.
\end{proof}
\begin{Them}\label{basisD}
Assume $m=2$. The set 
\begin{align*}
C_{\DC}&:=\left\{\eta^{\DC}_{i}\mid i=1,\dots\ell\right\}\cup
\left\{\theta^{\DC}_{\lambda}\mid\lambda\in\Lambda\right\}
\end{align*}
forms an $S$-basis for $D^{(2)}(\DC_{\ell})$. Hence 
\begin{align*}
\exp D^{(2)}(\DC_{\ell})=&\{2\ell-2,\dots,2\ell-2\}
\cup\{2\lambda_{1}+2\lambda_{2}-2\mid
\ell-2\geq\lambda_{1}\geq\lambda_{2}\geq 1\}\\
&\cup\{2\lambda_{1}-2+\ell\mid
\ell-2\geq\lambda_{1}\geq 1\}\cup\{2\ell-2\}.
\end{align*}
\end{Them}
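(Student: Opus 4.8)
The plan is to apply Saito-Holm's criterion (Proposition~\ref{Saito-Holm}). Membership $C_{\DC}\subseteq D^{(2)}(\DC_{\ell})$ is already supplied by Proposition~\ref{ope-d1} and Proposition~\ref{ope-d2}, so the whole task reduces to computing the coefficient matrix $M_{2}(C_{\DC})$ and verifying $\det M_{2}(C_{\DC})\doteq Q(\DC_{\ell})^{t_{2}}$, where $t_{2}=\binom{\ell}{1}=\ell$. Once freeness is established, the multiset of exponents is read off from the degrees of the operators in $C_{\DC}$ computed above, which reproduces the stated $\exp D^{(2)}(\DC_{\ell})$; so the entire content of the theorem lies in the determinant computation.

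First I would organize $M_{2}(C_{\DC})$ by splitting its $\binom{\ell+1}{2}$ columns, indexed by multi-indices $\bm{\alpha}$ with $|\bm{\alpha}|=2$, into the $\ell$ diagonal ones $\bm{\alpha}=2\bm{e}_{k}$ and the $\binom{\ell}{2}$ off-diagonal ones $\bm{\alpha}=\bm{e}_{i}+\bm{e}_{j}$ with $i<j$; the latter are indexed by $\mu=(i,j)\in Z$, and for them $x_{\bm{\alpha}}=x_{\mu}$ and $\bm{\alpha}!=1$. On the off-diagonal columns the row of $\theta^{\DC}_{\lambda}$ then carries the entries $s^{\DC}_{\lambda}(x_{\mu})$, $(x_{1}\cdots x_{\ell})\,s^{\DC}_{\lambda}(x_{\mu})$, or $(x_{1}\cdots x_{\ell})^{2}s^{\DC}_{\lambda}(x_{\mu})$ according as $\lambda\in\Lambda^{'}$, $\lambda\in\Lambda^{''}\setminus\{\lambda^{(0)}\}$, or $\lambda=\lambda^{(0)}$ (here $s^{\DC}_{(0,0)}(t_{1},t_{2})=(t_{1}t_{2})^{-1}$). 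For the rows of $\eta^{\DC}_{k}$ I would perform, over the field of fractions, the elementary operation that adds $(-1)^{\ell-1}x_{k}^{-1}$ times the $\theta^{\DC}_{\lambda^{(0)}}$-row to the $\eta^{\DC}_{k}$-row; by the very definition of $\eta^{\DC}_{k}$ this replaces that row by the row of $\frac{h^{\DC}_{k}}{2x_{k}}\partial_{k}^{2}$, whose only nonzero entry is $\frac{h^{\DC}_{k}}{2x_{k}}$, in column $2\bm{e}_{k}$. After these $\ell$ operations the matrix is block-triangular up to a row/column permutation: each $\eta^{\DC}_{k}$-row is now supported on the single column $2\bm{e}_{k}$, with entry $\frac{h^{\DC}_{k}}{2x_{k}}$, while the $\theta$-rows restricted to the $Z$-columns form a square matrix $N$ of order $\binom{\ell}{2}$. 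Hence $\det M_{2}(C_{\DC})\doteq\bigl(\prod_{k=1}^{\ell}\tfrac{h^{\DC}_{k}}{2x_{k}}\bigr)\det N$, and since $\prod_{k}h^{\DC}_{k}=\prod_{i\neq j}(x_{i}^{2}-x_{j}^{2})\doteq Q(\DC_{\ell})^{2}$ the first factor equals $Q(\DC_{\ell})^{2}/(x_{1}\cdots x_{\ell})$ up to a nonzero constant.

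Next I would evaluate $\det N$. Pulling out $(x_{1}\cdots x_{\ell})^{0}$, $(x_{1}\cdots x_{\ell})^{1}$, $(x_{1}\cdots x_{\ell})^{2}$ from the rows indexed by $\Lambda^{'}$, $\Lambda^{''}\setminus\{\lambda^{(0)}\}$, $\{\lambda^{(0)}\}$ respectively, and using $|\Lambda^{''}\setminus\{\lambda^{(0)}\}|=\ell-2$, the total factor removed is $(x_{1}\cdots x_{\ell})^{(\ell-2)+2}=(x_{1}\cdots x_{\ell})^{\ell}$, and what is left is exactly the matrix $\bigl(s^{\DC}_{\lambda}(x_{\mu})\bigr)_{\lambda\in\Lambda,\,\mu\in Z}$ of Proposition~\ref{det}. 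By (\ref{detsd}) with $m=2$ its determinant equals $Q(\DC_{\ell})^{\ell-2}/(x_{1}\cdots x_{\ell})^{\ell-1}$, so $\det N\doteq(x_{1}\cdots x_{\ell})\,Q(\DC_{\ell})^{\ell-2}$. Combining the two contributions gives $\det M_{2}(C_{\DC})\doteq\bigl(Q(\DC_{\ell})^{2}/(x_{1}\cdots x_{\ell})\bigr)\cdot(x_{1}\cdots x_{\ell})\,Q(\DC_{\ell})^{\ell-2}=Q(\DC_{\ell})^{\ell}$, which is condition $(1)$ of Proposition~\ref{Saito-Holm} with $t_{2}=\ell$; hence $C_{\DC}$ is an $S$-basis for $D^{(2)}(\DC_{\ell})$.

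The routine parts are the entry bookkeeping and the power counting. The step I expect to be the main obstacle is checking that the somewhat ad hoc normalizations in the definitions are exactly right: the correction term $-(-1)^{\ell-1}x_{k}^{-1}\theta^{\DC}_{\lambda^{(0)}}$ inside $\eta^{\DC}_{k}$ and the prefactors $x_{1}\cdots x_{\ell}$ and $(x_{1}\cdots x_{\ell})^{2}$ attached to the $\theta^{\DC}_{\lambda}$ with $\lambda\in\Lambda^{''}$ must simultaneously keep every operator inside $D^{(2)}(S)$ (not merely in the fraction field), yield the block-triangular form after the row reduction, and introduce precisely the power $(x_{1}\cdots x_{\ell})^{\ell}$ that cancels the factor $(x_{1}\cdots x_{\ell})^{-(\ell-1)}$ coming out of (\ref{detsd}). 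Unlike types $A$ and $B$, where $s^{\AC}_{\lambda}$ and $s^{\BC}_{\lambda}$ are already polynomial on the relevant substitutions, here $s^{\DC}_{\lambda}(x_{\bm{\alpha}})$ is only a Laurent polynomial, so the crux of the proof is that these bespoke scalings repair that defect while leaving the Cauchy--Sylvester compound-determinant identity (\ref{detsd}) applicable.
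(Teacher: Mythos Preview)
Your proposal is correct and follows essentially the same route as the paper's proof: the paper too adds $(-1)^{\ell-1}x_{k}^{-1}$ times the $\theta^{\DC}_{\lambda^{(0)}}$-row to each $\eta^{\DC}_{k}$-row (working over the fraction field), then pulls the factors $(x_{1}\cdots x_{\ell})$ and $(x_{1}\cdots x_{\ell})^{2}$ out of the $\Lambda^{''}$-rows to reduce the lower block to the matrix $\bigl(s^{\DC}_{\lambda}(x_{\mu})\bigr)$ of Proposition~\ref{det}, and combines the factors exactly as you do to obtain $Q(\DC_{\ell})^{\ell}$. Your write-up is in fact more explicit about the column splitting and the power counting than the paper's own argument.
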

\begin{proof}
By Proposition \ref{ope-d1} and Proposition \ref{ope-d2}, 
we have $C_{\DC}\subseteq D^{(2)}(\DC_{\ell})$. 
Let $M_{2}(C_{\DC})$ be the coefficient matrix of 
the operators in $C_{\DC}$. We shall show that 
$\det M_{2}(C_{\DC})\doteq Q(\DC_{\ell})^{\ell}$. 

Put $\theta_{\lambda}:=\sum_{|\bm{\alpha}|=2}
s^{\DC}_{\lambda}\left(x_{\bm{\alpha}}\right)\frac{1}{\bm{\alpha}!}
\partial^{\bm{\alpha}}$ for $\lambda\in\Lambda$. Then 
\begin{align*}
\det M_{2}(C_{\DC})&=\det M_{2}(\eta^{\DC}_{i},
\theta^{\DC}_{\lambda}\mid i=1,\dots,\ell,\ \lambda\in\Lambda)\\
&=\det M_{2}(\eta^{\DC}_{i}+(-1)^{\ell-1}
\frac{1}{x_{i}}\theta^{\DC}_{\lambda^{(0)}},
\theta^{\DC}_{\lambda}\mid i=1,\dots,\ell,\ \lambda\in\Lambda)\\
&=(x_{1}\cdots x_{\ell})^{\ell}\det M_{2}(\eta^{\DC}_{i}+(-1)^{\ell-1}
\frac{1}{x_{i}}\theta^{\DC}_{\lambda^{(0)}},
\theta_{\lambda}\mid i=1,\dots,\ell,\ \lambda\in\Lambda)\\
&\doteq\left(\frac{h^{\DC}_{1}}{x_{1}}\cdots 
\frac{h^{\DC}_{\ell}}{x_{\ell}}\right)(x_{1}\cdots x_{\ell})^{\ell}
\begin{vmatrix}
I_{\ell}&\ast\\
0&\det\left(s^{\DC}_{\lambda}(x_{\bm{\alpha}})\right)
_{\substack{\lambda\in\Lambda\\ \bm{\alpha}\in Z}}
\end{vmatrix}\\
&=Q(\DC_{\ell})^2 (x_{1}\cdots x_{\ell})^{\ell-1}
\frac{Q(\DC_{\ell})^{\ell-2}}{(x_{1}\cdots x_{\ell})^{\ell-1}}
=Q(\DC_{\ell})^{\ell}
\end{align*}
by Proposition \ref{det}. Hence we conclude that the set 
$C_{\DC}$ forms an $S$-basis for $D^{(2)}(\DC_{\ell})$ by 
Proposition \ref{Saito-Holm}.
\end{proof}
\section{Group actions}\label{GA}
Let $W$ be a finite reflection group generated by 
reflections acting on $V$. Then $W$ acts on $S$ by 
$(w\cdot f)(v)=f(w^{-1}\cdot v)$ for $f\in S$, 
$w\in W$ and $v\in V$. 
The action of $W$ on $D^{(m)}(S)$ is defined by 
$(w\cdot\theta)(f)=w\cdot(\theta(w^{-1}\cdot f))$ 
for $w\in W$, $\theta\in D^{(m)}(S)$ and $f\in S$. 

Let $\mathfrak{S}_{\ell}$ be the symmetric group 
acting on $V$ by permuting the coordinates. 
Let $\ZB/2\ZB=\{1,-1\}$. An abelian group $(\ZB/2\ZB)^{\ell}$
acts on $V$ by change of signs. 
Let $(\ZB/2\ZB)^{\ell-1}$ be the subgroup of $(\ZB/2\ZB)^{\ell}$ 
defined by 
$$
\left(\ZB/2\ZB\right)^{\ell-1}=
\left\{(a_{1},\dots,a_{\ell})\in(\ZB/2\ZB)^{\ell}\mid
a_{1}\cdots a_{\ell}=1\right\}.
$$
The group $\mathfrak{S}_{\ell}$ acts on 
$(\ZB/2\ZB)^{\ell}$ and $(\ZB/2\ZB)^{\ell-1}$ by 
permuting the coodinates.

The finite irreducible reflection groups of 
types A, B and D are defined by 
\begin{align*}
W^{A}&:=\mathfrak{S}_{\ell},\\
W^{B}&:=\mathfrak{S}_{\ell}\ltimes(\ZB/2\ZB)^{\ell},\\
W^{D}&:=\mathfrak{S}_{\ell}\ltimes(\ZB/2\ZB)^{\ell-1}.
\end{align*}
From now on, we assume $\ell\geq 4$ when we consider the 
reflection group of type D. 
Then the groups $W^{A}$, $W^{B}$ and $W^{D}$ act on $V$. 
Hence the groups $W^{A}$, $W^{B}$ and $W^{D}$ act on 
$S$ and $D^{(m)}(S)$.
\begin{Prop}\label{action}
Let $W$ be a finite reflection group, and $\AS$ 
the reflection arrangement consisting of 
all reflection hyperplanes of $W$. 
Then the submodule $D^{(m)}(\AS)$ of $D^{(m)}(S)$ 
is closed under the action of $W$.
\end{Prop}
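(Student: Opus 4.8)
The plan is to reduce the statement to the single fact that, since $\AS$ consists of \emph{all} reflecting hyperplanes of $W$, the group $W$ permutes $\AS$, so that the defining polynomial $Q(\AS)$ is a relative invariant. First I would record that for $w\in W$ and $H\in\AS$ the hyperplane $w(H)$ again belongs to $\AS$: if $s_{H}$ is the reflection in $H$, then $ws_{H}w^{-1}$ is the reflection in $w(H)$, hence $w(H)$ is a reflecting hyperplane of $W$, and $H\mapsto w(H)$ is a permutation of $\AS$ with inverse $H\mapsto w^{-1}(H)$. From $(w\cdot f)(v)=f(w^{-1}\cdot v)$ one gets $\ker(w\cdot p_{H})=w(H)$, so $w\cdot p_{H}\doteq p_{w(H)}$; multiplying over all $H\in\AS$ and using that $H\mapsto w(H)$ is a bijection gives $w\cdot Q(\AS)\doteq Q(\AS)$, i.e.\ $w\cdot Q(\AS)=c_{w}Q(\AS)$ for some $c_{w}\in\RB^{\times}$.

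Next I would deduce that the principal ideal $Q(\AS)S$ is stable under $W$ as a subset of $S$. Since $W$ acts on $S$ by ring automorphisms, for $g=Q(\AS)s\in Q(\AS)S$ we have $w\cdot g=(w\cdot Q(\AS))(w\cdot s)=c_{w}Q(\AS)(w\cdot s)\in Q(\AS)S$, so $w\cdot(Q(\AS)S)\subseteq Q(\AS)S$; applying this also to $w^{-1}$ yields the equality $w\cdot(Q(\AS)S)=Q(\AS)S$ for every $w\in W$.

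With these two preliminaries the proposition is a formal manipulation of the two $W$-actions. Let $\theta\in D^{(m)}(\AS)$ and $w\in W$; from the definition of the $W$-action on $D^{(m)}(S)$ we already know $w\cdot\theta\in D^{(m)}(S)$. For any $g\in Q(\AS)S$ we have $w^{-1}\cdot g\in Q(\AS)S$ by the previous paragraph, hence $\theta(w^{-1}\cdot g)\in Q(\AS)S$ because $\theta\in D^{(m)}(\AS)$, and therefore
$$
(w\cdot\theta)(g)=w\cdot\bigl(\theta(w^{-1}\cdot g)\bigr)\in w\cdot(Q(\AS)S)=Q(\AS)S.
$$
Thus $(w\cdot\theta)(Q(\AS)S)\subseteq Q(\AS)S$, that is, $w\cdot\theta\in D^{(m)}(\AS)$, which is the assertion.

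The only step that is not a pure formality is the opening one: the identity $w\cdot Q(\AS)\doteq Q(\AS)$ genuinely uses that $\AS$ is the reflection arrangement of $W$, so that $W$ permutes its hyperplanes. Everything after that is bookkeeping with the definitions of the $W$-actions on $S$ and on $D^{(m)}(S)$, and I do not anticipate any real obstacle.
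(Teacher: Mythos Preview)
Your proposal is correct and follows essentially the same route as the paper: both arguments rest on the fact that $Q(\AS)$ is a relative invariant of $W$ (the paper records this as $w\cdot Q=\det(w)Q$, you as $w\cdot Q=c_{w}Q$), deduce that the ideal $Q(\AS)S$ is $W$-stable, and then unwind the definition $(w\cdot\theta)(g)=w\cdot(\theta(w^{-1}\cdot g))$. Your version is slightly more explicit in justifying why $W$ permutes $\AS$, but the structure of the argument is the same.
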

\begin{proof}
For $w\in W$ and $\theta\in D^{(m)}(\AS)$, 
we prove that $w\cdot\theta\in D^{(m)}(\AS)$. 

For $f\in S$, we have 
\begin{align*}
w\cdot\theta(Qf)&=w\cdot(\theta(w^{-1}\cdot(Qf)))\\
&=w\cdot(\theta(\det(w^{-1})Q\,(w^{-1}\cdot f))).
\end{align*}
Since $\det(w^{-1})Q\,(w^{-1}\cdot f)\in QS$, we have 
$\theta(\det(w^{-1})Q\,(w^{-1}\cdot f))\in QS$. 
Then there exists $g\in S$ such that 
$\theta(\det(w^{-1})Q\,(w^{-1}\cdot f))=Qg$. 
Hence
\begin{align*}
w\cdot\theta(Qf)=w\cdot(Qg)=(\det(w)Q)(w\cdot g)\in QS.
\end{align*}
\end{proof}
By Proposition \ref{action}, the groups 
$W^{A}$, $W^{B}$ and $W^{D}$ act on 
$D^{(m)}(\AC)$, $D^{(m)}(\BC)$ and $D^{(m)}(\DC)$, respectively.

In case $m=1$, the modules $D^{(1)}(\AC)$, $D^{(1)}(\BC)$ 
and $D^{(1)}(\DC)$ have bases consisting of only 
invariant elements \cite[Theorem 6.60]{O-T}. 
In this section, we prove that $D^{(2)}(\AC)$, $D^{(2)}(\BC)$ 
and $D^{(2)}(\DC)$ cannot have bases consisting of 
only invariant elements, when $m=2$. 

The actions of a transposition 
$\sigma_{i,j}:=(i\,j)\in\mathfrak{S}_{\ell}$ on 
$\{x_{1},\dots x_{\ell}\}$ and 
$\{\partial_{1},\dots,\partial_{\ell}\}$ are as follows:
\begin{align*}
\sigma_{i,j}\cdot x_{k}=x_{\sigma_{i,j}\cdot k},\ 
\sigma_{i,j}\cdot \partial_{k}=\partial_{\sigma_{i,j}\cdot k}\qquad
(k=1,\dots,\ell).
\end{align*}
The group $\mathfrak{S}_{\ell}$ acts on 
the set of multi-indices by permuting the coordinates: 
$$
\sigma_{i,j}\cdot(\alpha_{1},\dots,\alpha_{i},\dots,\alpha_{j},\dots,\alpha_{\ell})
=(\alpha_{1},\dots,\alpha_{j},\dots,\alpha_{i},\dots,\alpha_{\ell})
$$
The action of $\mathfrak{S}_{\ell}$ preserves the norm of a multi-index. 
Then $\sigma_{i,j}\cdot x^{\bm{\alpha}}=x^{\sigma_{i,j}\cdot\bm{\alpha}}$ 
for a multi-index $\bm{\alpha}\in \NB^{\ell}$. 

Let $\tau_{i}\in(\ZB/2\ZB)^{\ell}$ be the element of 
change of signs of the $i$-th coordinate:
\begin{align}
\tau_{i}\cdot x_{k}=a_{i,k}x_{k},\ 
\tau_{i}\cdot \partial_{k}=a_{i,k}\partial_{k}\qquad
(k=1,\dots,\ell)\label{acttau}
\end{align}
where $a_{i,i}=-1$ and $a_{i,k}=1$ for $k\neq i$. 
%From now on we assume that $m=2$. 
\begin{Lem}\label{inv-theta}
Let $\lambda\in\Lambda$.
\begin{enumerate}
\item[$(1)$]
The operator $\theta^{\AC}_{\lambda}$ is $W^{A}$-invariant.
\item[$(2)$]
The operator $\theta^{\BC}_{\lambda}$ is $W^{B}$-invariant.
\item[$(3)$]
The operator $\theta^{\DC}_{\lambda}$ is $W^{D}$-invariant when $m=2$.
\end{enumerate}
\end{Lem}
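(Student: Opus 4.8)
The plan is to verify directly that each operator $\theta^{\AS}_{\lambda}$ is fixed by the generators of the relevant reflection group, using the explicit formula $\theta_f=\sum_{|\bm{\alpha}|=m}f(x_{\bm{\alpha}})\frac{1}{\bm{\alpha}!}\partial^{\bm{\alpha}}$ and the action rules recorded in \eqref{acttau} and the preceding display. The key observation is a transformation law: for a transposition $\sigma=\sigma_{i,j}$ one has $\sigma\cdot\theta_f=\theta_{f\circ\sigma}$ in an appropriate sense, and since the Laurent polynomials $s^{\AC}_\lambda,s^{\BC}_\lambda$ (and $s^{\DC}_\lambda$ for $\lambda_2\geq1$) are symmetric, the coefficient functions $f(x_{\bm{\alpha}})$ are simply permuted among the $\partial^{\bm{\alpha}}$, leaving $\theta_f$ unchanged. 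So the first step is to prove the lemma for the symmetric-group part $\mathfrak{S}_\ell$, which handles case $(1)$ completely and is the bulk of cases $(2)$ and $(3)$.

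Next I would treat the sign changes. For $W^B$ the group is $\mathfrak{S}_\ell\ltimes(\ZB/2\ZB)^\ell$, so it remains to check that each $\tau_i$ fixes $\theta^{\BC}_\lambda$. Here the relevant structural fact is the expression \eqref{exprB}: $s^{\BC}_\lambda=t_1\cdots t_m\, s^{\AC}_\lambda(t_1^2,\dots,t_m^2)$. When $\tau_i$ acts, each $\partial_k$ in a monomial $\partial^{\bm{\alpha}}$ contributes a factor $a_{i,k}$, so $\tau_i\cdot\partial^{\bm{\alpha}}=(-1)^{\alpha_i}\partial^{\bm{\alpha}}$; simultaneously the coefficient $s^{\BC}_\lambda(x_{\bm{\alpha}})$ picks up the factor $(-1)^{\alpha_i}$ coming from the $t_1\cdots t_m$ prefactor (the squared part $s^{\AC}_\lambda(t_1^2,\dots)$ is insensitive to sign), because the number of $x_i$'s appearing in $x_{\bm{\alpha}}$ is exactly $\alpha_i$. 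The two signs cancel term by term, so $\tau_i\cdot\theta^{\BC}_\lambda=\theta^{\BC}_\lambda$. For $W^D$ (case $(3)$, $m=2$) the same computation applies once we fix $\lambda\in\Lambda$ and recall the three defining formulas \eqref{thetaD1}--\eqref{thetaD3}: for $\lambda\in\Lambda'$ the operator is $\theta^{\DC}_\lambda=\theta_{s^{\DC}_\lambda}$ with $s^{\DC}_\lambda$ a genuine symmetric polynomial (since $\lambda_2\geq1$), so the $\mathfrak{S}_\ell$-argument and the parity argument go through verbatim; for $\lambda\in\Lambda''\setminus\{\lambda^{(0)}\}$ and for $\lambda^{(0)}$ one multiplies by $(x_1\cdots x_\ell)$ or $(x_1\cdots x_\ell)^2$, and one must track how that prefactor transforms. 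A single $\tau_i$ sends $x_1\cdots x_\ell\mapsto -x_1\cdots x_\ell$, but an element of $(\ZB/2\ZB)^{\ell-1}$ flips an even number of signs, hence fixes $x_1\cdots x_\ell$; combined with the parity cancellation in the sum, this gives $W^D$-invariance. (This is precisely why the $W^D$ statement needs $m=2$ and the restriction to even sign changes: for $\lambda\in\Lambda''$, $s^{\DC}_\lambda$ alone is not symmetric, and the polynomial correction $(x_1\cdots x_\ell)$ is only invariant under the index-two subgroup.)

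Concretely, the steps in order: (i) establish $\sigma_{i,j}\cdot\theta_f=\theta_f$ for symmetric $f$, directly from $\sigma_{i,j}\cdot\partial^{\bm{\alpha}}=\partial^{\sigma_{i,j}\cdot\bm{\alpha}}$, $\sigma_{i,j}\cdot x^{\bm{\beta}}=x^{\sigma_{i,j}\cdot\bm{\beta}}$, the fact that $x_{\sigma_{i,j}\cdot\bm{\alpha}}$ is a permutation of $x_{\bm{\alpha}}$, and symmetry of $f$; (ii) deduce case $(1)$; (iii) establish $\tau_i\cdot\theta_{s^{\BC}_\lambda}=\theta_{s^{\BC}_\lambda}$ via the $t_1\cdots t_m$ prefactor / parity cancellation, and combine with (i) to get case $(2)$; (iv) for case $(3)$, split into the three families \eqref{thetaD1}--\eqref{thetaD3}, reuse (i) and the parity computation, and check that the prefactors $(x_1\cdots x_\ell)^{0,1,2}$ are fixed by $\mathfrak{S}_\ell$ and by $(\ZB/2\ZB)^{\ell-1}$. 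The main obstacle is bookkeeping in step (iv): one has to be careful that the factor $(-1)^{\alpha_i}$ contributed by $s^{\DC}_\lambda(x_{\bm{\alpha}})$ (through its own $\frac{1}{t_1t_2}$ or $t_1t_2$ structure) together with the parity of the $(x_1\cdots x_\ell)^{j}$ prefactor exactly matches $(-1)^{\alpha_i}$ from $\tau_i\cdot\partial^{\bm{\alpha}}$, and this depends on $j$ having the right value for each family — so the verification must be done family by family rather than in one stroke. None of the computations is deep, but the sign tracking is where an error would hide.
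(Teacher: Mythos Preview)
Your proposal is correct and follows essentially the same approach as the paper: check invariance on generators, using that $\mathfrak{S}_\ell$ permutes the summands of $\theta_f$ when $f$ is symmetric, and that the sign picked up by $\partial^{\bm{\alpha}}$ under $\tau_i$ is cancelled by the sign picked up by the coefficient via the prefactor in \eqref{exprB}--\eqref{exprD}. The only differences are cosmetic---the paper verifies the single Coxeter generator $\sigma_{\ell-1,\ell}\tau_{\ell-1}\tau_\ell$ of $W^D$ directly rather than treating $\mathfrak{S}_\ell$ and $(\ZB/2\ZB)^{\ell-1}$ separately---and one slip of wording in your parenthetical: $s^{\DC}_\lambda$ \emph{is} symmetric for every $\lambda\in\Lambda$; what fails for $\lambda\in\Lambda''$ is that it is not a polynomial, which is why the prefactor $(x_1\cdots x_\ell)$ enters.
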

\begin{proof}
$(1)$ Since $W^{A}$ is generated by transpositions 
$\sigma_{1,2},\dots,\sigma_{\ell-1,\ell}$ 
(see, for example, \cite{Bou}), it is enough to prove that 
$$
\theta^{\AC}_{\lambda}=\sum_{|\bm{\alpha}|=m}
s^{\AC}_{\lambda}\left(x_{\bm{\alpha}}\right)\frac{1}{\bm{\alpha}!}
\partial^{\bm{\alpha}}
$$ 
is invariant under the actions of transpositions 
$\sigma_{1,2},\dots,\sigma_{\ell-1,\ell}$. 

Clearly, we have that $(\sigma_{i,i+1}\cdot\bm{\alpha})!=\bm{\alpha}!$ 
and $|\sigma_{i,i+1}\cdot\bm{\alpha}|=|\bm{\alpha}|$ for a multi-index 
$\bm{\alpha}$ with $|\bm{\alpha}|=m$. 
Then a transposition $\sigma_{i,i+1}$ is a bijection between 
the set $\{\bm{\alpha}\in\NB^{\ell}\mid |\bm{\alpha}|=m\}$ 
and itself. Therefore we have that, for $i=1,\dots,\ell-1$, 
\begin{align*}
\sigma_{i,i+1}\cdot\theta^{\AC}_{\lambda}&=
\sum_{|\sigma_{i,i+1}\cdot\bm{\alpha}|=m}\sigma_{i,i+1}\cdot\left(
s^{\AC}_{\lambda}(x_{\sigma_{i,i+1}\cdot\bm{\alpha}})\right)
\frac{1}{(\sigma_{i,i+1}\cdot\bm{\alpha})!}
\sigma_{i,i+1}\cdot\left(\partial^{\sigma_{i,i+1}\cdot\bm{\alpha}}\right)\\
&=\sum_{|\sigma_{i,i+1}\cdot\bm{\alpha}|=m}
s^{\AC}_{\lambda}(x_{\sigma_{i,i+1}\cdot\sigma_{i,i+1}\cdot\bm{\alpha}})
\frac{1}{(\sigma_{i,i+1}\cdot\bm{\alpha})!}
\partial^{\sigma_{i,i+1}\cdot\sigma_{i,i+1}\cdot\bm{\alpha}}\\
&=\sum_{|\bm{\alpha}|=m}s^{\AC}_{\lambda}
\left(x_{\bm{\alpha}}\right)\frac{1}{\bm{\alpha}!}
\cdot\partial^{\bm{\alpha}}=\theta^{\AC}_{\lambda}.
\end{align*}

$(2)$ The group $W^{B}$ is generated by $\tau_{\ell}$ and 
transpositions $\sigma_{1,2},\dots,\sigma_{\ell-1,\ell}$ 
(see \cite{Bou}). 
It is enough to prove that $\theta^{\BC}_{\lambda}$ 
is invariant under the actions of the generators. 

By the formulas (\ref{x_a^2}) and (\ref{exprB}), we have 
\begin{align}
s^{\BC}_{\lambda}(x_{\bm{\alpha}})=
x_{1}^{\alpha_{1}}\cdots x_{\ell}^{\alpha_{\ell}}
s^{\AC}_{\lambda}(x_{\bm{\alpha}}^2)\label{sx_a}
\end{align}
for a multi-index $\bm{\alpha}$ with $|\bm{\alpha}|=m$. 
Then 
\begin{align*}
\sigma_{i,i+1}\cdot s^{\BC}_{\lambda}(x_{\bm{\alpha}})&=
x_{\sigma_{i,i+1}\cdot 1}^{\alpha_{1}}\cdots
x_{\sigma_{i,i+1}\cdot\ell}^{\alpha_{\ell}}
s^{\AC}_{\lambda}(x_{\sigma_{i,i+1}\cdot\bm{\alpha}}^2)\\
&=x_{1}^{\alpha_{\sigma_{i,i+1}\cdot 1}}\cdots
x_{\ell}^{\alpha_{\sigma_{i,i+1}\cdot \ell}}
s^{\AC}_{\lambda}(x_{\sigma_{i,i+1}\cdot\bm{\alpha}}^2)=
s^{\BC}_{\lambda}(x_{\sigma_{i,i+1}\cdot\bm{\alpha}}).
\end{align*}
Hence we have that 
\begin{align*}
\sigma_{i,i+1}\cdot\theta^{\BC}_{\lambda}
=\sum_{|\bm{\alpha}|=m}s^{\BC}_{\lambda}
\left(x_{\sigma_{i,i+1}\cdot\bm{\alpha}}\right)
\frac{1}{(\sigma_{i,i+1}\cdot\bm{\alpha})!}
\cdot\partial^{\sigma_{i,i+1}\cdot\bm{\alpha}}
=\theta^{\BC}_{\lambda}.
\end{align*}
It remains to prove that $\theta^{\BC}_{\lambda}$ is 
$\tau_{\ell}$-invariant. 

By the formulas (\ref{acttau}) and (\ref{sx_a}), we have 
$\tau_{\ell}\cdot\partial^{\bm{\alpha}}=
(-1)^{\alpha_{\ell}}\partial^{\bm{\alpha}}$ and 
$\tau_{\ell}\cdot s^{\BC}_{\lambda}(x_{\bm{\alpha}})=
(-1)^{\alpha_{\ell}} s^{\BC}_{\lambda}(x_{\bm{\alpha}})$. 
Then we have 
\begin{align*}
\tau_{\ell}\cdot\theta^{\BC}_{\lambda}=
\sum_{|\bm{\alpha}|=m}(-1)^{\alpha_{\ell}}s^{\BC}_{\lambda}
\left(x_{\bm{\alpha}}\right)\frac{1}{\bm{\alpha}!}
(-1)^{\alpha_{\ell}}\partial^{\tau_{\ell}\cdot\bm{\alpha}}
=\theta^{\BC}_{\lambda}.
\end{align*}
Hence the operator $w\cdot\theta^{\BC}_{\lambda}$ coincides 
with $\theta^{\BC}_{\lambda}$ for $w\in W^{B}$.

$(3)$ The proof of $(3)$ goes similarly to $(2)$. 
The set 
$$
\{\sigma_{1,2},\dots,\sigma_{\ell-1,\ell}
,\sigma_{\ell-1,\ell}\tau_{\ell-1}\tau_{\ell}\}
$$ 
is a system of generators of $W^{D}$ (see \cite{Bou}).

The formulas (\ref{x_a^2}) and (\ref{exprD}) imply that
\begin{align}
s^{\DC}_{\lambda}(x_{\bm{\alpha}})=
\frac{1}{x_{1}^{\alpha_{1}}\cdots x_{\ell}^{\alpha_{\ell}}}
s^{\AC}_{\lambda}(x_{\sigma_{i,i+1}\cdot\bm{\alpha}}^2).\label{sDx_a}
\end{align}
Then the formula (\ref{sDx_a}) leads to 
\begin{align*}
\sigma_{i,i+1}\cdot s^{\DC}_{\lambda}(x_{\bm{\alpha}})=
\frac{1}{\sigma_{i,i+1}\cdot
(x_{1}^{\alpha_{1}}\cdots x_{\ell}^{\alpha_{\ell}})}
s^{\AC}_{\lambda}(x_{\sigma_{i,i+1}\cdot\bm{\alpha}}^2)=
s^{\DC}_{\lambda}(x_{\sigma_{i,i+1}\cdot\bm{\alpha}}).
\end{align*}
We obtain 
$\sigma_{i,i+1}\cdot\theta^{\DC}_{\lambda}=\theta^{\DC}_{\lambda}$ 
by a straightforward calculation using the formulas 
(\ref{thetaD1}), (\ref{thetaD2}) and (\ref{thetaD3}).

In order to verify 
$\sigma_{\ell-1,\ell}\tau_{\ell-1}\tau_{\ell}
\cdot\theta^{\DC}_{\lambda}=\theta^{\DC}_{\lambda}$, 
we compute the actions of $\sigma_{1,\ell-1}\tau_{\ell-1}\tau_{\ell}$ 
on polynomials and differential operators:
\begin{align*}
\sigma_{\ell-1,\ell}\tau_{\ell-1}\tau_{\ell}\cdot
s^{\DC}_{\lambda}(x_{\bm{\alpha}})&=(-1)^{\alpha_{\ell-1}+\alpha_{\ell}}
\frac{1}{\sigma_{\ell-1,\ell}\cdot(x_{1}^{\alpha_{1}}\cdots
x_{\ell}^{\alpha_{\ell}})}s^{\AC}_{\lambda}
(x_{\sigma_{\ell-1,\ell}\cdot\bm{\alpha}}^2)\\
&=(-1)^{\alpha_{\ell-1}+\alpha_{\ell}}s^{\DC}_{\lambda}(x_{\bm{\alpha}}),\\
\sigma_{\ell-1,\ell}\tau_{\ell-1}\tau_{\ell}\cdot\partial^{\bm{\alpha}}
&=(-1)^{\alpha_{\ell-1}+\alpha_{\ell}}
\partial^{\sigma_{\ell-1,\ell}\cdot\bm{\alpha}},\\
\sigma_{\ell-1,\ell}\tau_{\ell-1}\tau_{\ell}\cdot
\frac{1}{x_{1}\cdots x_{\ell}}&=\frac{1}{x_{1}\cdots x_{\ell}},\\
\sigma_{\ell-1,\ell}\tau_{\ell-1}\tau_{\ell}\cdot
\frac{1}{(x_{1}\cdots x_{\ell})^2}&=\frac{1}{(x_{1}\cdots x_{\ell})^2}.
\end{align*}
By the case-by-case checking, we can verify that 
$\sigma_{\ell-1,\ell}\tau_{\ell-1}\tau_{\ell}
\cdot\theta^{\DC}_{\lambda}$ coincides with $\theta^{\DC}_{\lambda}$
for $\lambda\in\Lambda$. 
Therefore the operator $\theta^{\DC}_{\lambda}$ is $W^{D}$-invariant.
\end{proof}
\begin{Lem}\label{eta-V}
\begin{enumerate}
\item[$(1)$] The vector space spanned by 
$\left\{\eta^{\AC}_{k}\mid k=1,\dots\ell\right\}$ 
is closed under the action of $W^{A}$. 
Moreover, the vector space spanned by 
$\left\{\eta^{\AC}_{k}\mid k=1,\dots\ell\right\}$ 
is isomorphic to the Euclidean space $V$ as $W^A$-modules.
\item[$(2)$] The vector space spanned by 
$\left\{\eta^{\BC}_{k}\mid k=1,\dots\ell\right\}$ 
is closed under the action of $W^{B}$. 
Moreover, the vector space spanned by 
$\left\{\eta^{\BC}_{k}\mid k=1,\dots\ell\right\}$ 
is isomorphic to the Euclidean space $V$ as $W^B$-modules.
\item[$(3)$] Let $m=2$. The vector space spanned by 
$\left\{\eta^{\DC}_{k}\mid k=1,\dots\ell\right\}$ 
is closed under the action of $W^{D}$. 
Moreover, the vector space spanned by 
$\left\{\eta^{\DC}_{k}\mid k=1,\dots\ell\right\}$ 
is isomorphic to the Euclidean space $V$ as $W^D$-modules.
\end{enumerate}
\end{Lem}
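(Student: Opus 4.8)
The plan is to treat the three cases uniformly: in each case I will exhibit, for every reflection-group generator $g$ and every index $k$, an explicit formula $g\cdot\eta_k = \pm\,\eta_{g(k)}$ (or a signed permutation of the $\eta$'s), which simultaneously shows the span is $W$-stable and identifies the action with the defining permutation representation on $V$. Concretely, in case $(1)$ I would first record how a transposition $\sigma_{i,i+1}$ acts on the ingredients of $\eta^{\AC}_k = h^{\AC}_k\frac{1}{m!}\partial_k^m$: since $\sigma_{i,i+1}\cdot\partial_k = \partial_{\sigma_{i,i+1}(k)}$ and $\sigma_{i,i+1}$ merely permutes the factors of $h^{\AC}_k = \prod_{j\neq k}(x_k-x_j)$, one gets $\sigma_{i,i+1}\cdot h^{\AC}_k = h^{\AC}_{\sigma_{i,i+1}(k)}$ (the factor $(x_k-x_{i})$ becoming $(x_{k'}-x_{i'})$ etc., with no sign because both indices in each difference get permuted). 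Hence $\sigma_{i,i+1}\cdot\eta^{\AC}_k = \eta^{\AC}_{\sigma_{i,i+1}(k)}$, so the span is a quotient/subrepresentation of $V$; since the $\eta^{\AC}_k$ are linearly independent (their coefficient matrix is the invertible block appearing in Theorem \ref{basisAB}), it is isomorphic to $V$ via $\eta^{\AC}_k\leftrightarrow x_k$.

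For case $(2)$ the same argument handles the transpositions, and the only new generator is $\tau_\ell$. Here I would check $\tau_\ell\cdot\partial_k^m = (-1)^{m\,\delta_{k\ell}}\partial_k^m$ and that $\tau_\ell$ fixes each factor $(x_k^2-x_j^2)$ of $h^{\BC}_k$ while sending the leading $x_k$ to $-x_k$ only when $k=\ell$; thus $\tau_\ell\cdot h^{\BC}_k = (-1)^{\delta_{k\ell}}h^{\BC}_k$. With $m=2$ this gives $\tau_\ell\cdot\eta^{\BC}_k = (-1)^{\delta_{k\ell}}\eta^{\BC}_k$, matching exactly the action of $\tau_\ell$ on the basis vector $x_k$ of $V$. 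Combined with the transposition computation, the span of the $\eta^{\BC}_k$ is $W^B$-stable and the correspondence $\eta^{\BC}_k\leftrightarrow x_k$ is a $W^B$-isomorphism onto $V$.

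Case $(3)$ is the delicate one and is where I expect the real work to lie, because $\eta^{\DC}_k = \frac{h^{\DC}_k}{2x_k}\partial_k^2 - (-1)^{\ell-1}\frac{1}{x_k}\theta^{\DC}_{\lambda^{(0)}}$ is not a pure multiple of $\partial_k^2$: it carries the correction term built from $\theta^{\DC}_{\lambda^{(0)}}$, which by Lemma \ref{inv-theta}(3) is $W^D$-invariant. The strategy is that the $\theta^{\DC}_{\lambda^{(0)}}$ term transforms like $\frac{1}{x_k}$ alone (since $\theta^{\DC}_{\lambda^{(0)}}$ is fixed), while the main term $\frac{h^{\DC}_k}{2x_k}\partial_k^2$ must be shown to transform by $\pm$ compatibly. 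For a transposition $\sigma_{i,i+1}$: it permutes the $\ell-1$ factors $(x_k^2-x_j^2)$ of $h^{\DC}_k$ and sends $\partial_k^2\mapsto\partial_{\sigma(k)}^2$, $x_k\mapsto x_{\sigma(k)}$, so both pieces map to the corresponding index $\sigma(k)$ with no sign, giving $\sigma_{i,i+1}\cdot\eta^{\DC}_k = \eta^{\DC}_{\sigma_{i,i+1}(k)}$. The obstacle is the generator $\sigma_{\ell-1,\ell}\tau_{\ell-1}\tau_\ell$: here $\tau_{\ell-1}\tau_\ell$ acts on $\partial_k^2$ trivially (two sign changes squared), fixes each $x_k^2$, but changes the sign of the bare $x_k$ in the denominator when $k\in\{\ell-1,\ell\}$ — yet it also sends $h^{\DC}_{\ell-1}$, $h^{\DC}_\ell$ into each other after the $\sigma_{\ell-1,\ell}$ swap, and one must track how $\frac{1}{x_k}$ and $\frac{1}{x_k}\theta^{\DC}_{\lambda^{(0)}}$ behave simultaneously. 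The key point to verify carefully is that the factor $(-1)^{\ell-1}$ in the definition is precisely what makes the sign of the correction term agree with the sign picked up by $\frac{h^{\DC}_k}{2x_k}\partial_k^2$; granting that, one finds $\sigma_{\ell-1,\ell}\tau_{\ell-1}\tau_\ell$ permutes $\{\eta^{\DC}_1,\dots,\eta^{\DC}_\ell\}$ exactly as it permutes $\{x_1,\dots,x_\ell\}$ (namely transposing the last two with no sign, since this element lies in $W^D\subseteq O(V)$ and acts on $V$ as a transposition). Linear independence of the $\eta^{\DC}_k$ comes from their appearance as the invertible $I_\ell$-block in the determinant computation of Theorem \ref{basisD}, completing the identification with $V$ as $W^D$-modules.
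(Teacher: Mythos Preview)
Your approach in parts (1) and (2) is essentially the same as the paper's and is correct: verify on Coxeter generators that $g\cdot\eta_k=\pm\eta_{g(k)}$, then transport the action along $\eta_k\leftrightarrow e_k$.

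However, there is a concrete error in your treatment of part (3). The generator $\sigma_{\ell-1,\ell}\tau_{\ell-1}\tau_\ell$ does \emph{not} act on $V$ as an unsigned transposition. It is the reflection through the hyperplane $\{x_{\ell-1}+x_\ell=0\}$, so
\[
\sigma_{\ell-1,\ell}\tau_{\ell-1}\tau_\ell\cdot e_{\ell-1}=-e_\ell,\qquad
\sigma_{\ell-1,\ell}\tau_{\ell-1}\tau_\ell\cdot e_{\ell}=-e_{\ell-1},
\]
with $e_k$ fixed for $k\le\ell-2$. Your own observation that ``the sign of the bare $x_k$ in the denominator'' changes for $k\in\{\ell-1,\ell\}$ already shows the sign cannot vanish: under this generator $\frac{h^{\DC}_\ell}{2x_\ell}\partial_\ell^2\mapsto -\frac{h^{\DC}_{\ell-1}}{2x_{\ell-1}}\partial_{\ell-1}^2$ and $\frac{1}{x_\ell}\theta^{\DC}_{\lambda^{(0)}}\mapsto -\frac{1}{x_{\ell-1}}\theta^{\DC}_{\lambda^{(0)}}$, hence $\eta^{\DC}_\ell\mapsto -\eta^{\DC}_{\ell-1}$ and similarly $\eta^{\DC}_{\ell-1}\mapsto -\eta^{\DC}_\ell$. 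This matches the action on $V$, so the conclusion (isomorphism with $V$) survives, but your stated expected outcome and your justification (``acts on $V$ as a transposition'') are wrong.

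A secondary point: the constant $(-1)^{\ell-1}$ in the definition of $\eta^{\DC}_k$ plays no role in the $W^D$-equivariance. Both summands of $\eta^{\DC}_k$ carry the same $\frac{1}{x_k}$ factor and therefore pick up the same sign under any signed permutation, regardless of this scalar. The factor $(-1)^{\ell-1}$ is there only to make the coefficient of $\partial_k^2$ a polynomial (cf.\ the computation just before Proposition \ref{ope-d2}), not to align signs under the group action.
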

\begin{proof}
$(1)$ Let $\langle\eta^{\AC}_{1},\dots,\eta^{\AC}_{\ell}\rangle_{\RB}$ 
be the vector space spanned by 
$\left\{\eta^{\DC}_{k}\mid k=1,\dots\ell\right\}$. 
Define a linear isomorphism 
$$
\phi_{A}\,:\,
\langle\eta^{\AC}_{1},\dots,\eta^{\AC}_{\ell}\rangle_{\RB}
\longrightarrow V
$$
by $\phi_{A}(\eta^{\AC}_{k})=e_{k}$ for $k=1,\dots,\ell$. 

A transposition $\sigma_{i,i+1}$ acts on the standard basis 
$e_{1},\dots,e_{\ell}$ of $V$ by 
$\sigma_{i,i+1}\cdot e_{k}=e_{\sigma_{i,i+1}\cdot k}$ 
for $k=1,\dots,\ell$. 
To prove the assertion, we verify 
$\sigma_{i,i+1}\cdot\eta^{\AC}_{k}=\eta^{\AC}_{\sigma_{i,i+1}\cdot k}$ 
for $k=1,\dots,\ell$. 

Recall that the definitions 
$h^{\AC}_{k}=(x_{k}-x_{1})\cdots(x_{k}-x_{k-1})
(x_{k}-x_{k+1})\cdots(x_{k}-x_{\ell})$ and 
$\eta_{k}^{\AC}=h^{\AC}_{k}\frac{1}{m!}\partial_{k}^m$. 
For $k\neq i,i+1$, we have 
$\sigma_{i,i+1}\cdot h^{\AC}_{k}=h^{\AC}_{k}$. 
Also we have 
$$
\sigma_{i,i+1}\cdot h^{\AC}_{i}=
(x_{i+1}-x_{1})\cdots(x_{i+1}-x_{i-1})
(x_{i+1}-x_{i})(x_{i+1}-x_{i+2})\cdots(x_{k}-x_{\ell})=h^{\AC}_{i+1},
$$
and similarly $\sigma_{i,i+1}\cdot h^{\AC}_{i+1}=h^{\AC}_{i}$. 
Then we obtain 
$$
\sigma_{i,i+1}\cdot\eta^{\AC}_{k}=\eta^{\AC}_{k}\ (k\neq i,i+1),\ 
\sigma_{i,i+1}\cdot\eta^{\AC}_{i}=\eta^{\AC}_{i+1},\ 
\sigma_{i,i+1}\cdot\eta^{\AC}_{i+1}=\eta^{\AC}_{i}.
$$
Hence we conclude that the map $\phi_{A}$ is a $W^A$-isomorphism.

$(2)$ A system of generators 
$\sigma_{1,2},\dots,\sigma_{\ell-1,\ell}$ and $\tau_{\ell}$ 
of $W^{B}$ acts on the standard basis 
$e_{1},\dots,e_{\ell}$ of $V$ by 
$\sigma_{i,i+1}\cdot e_{k}=e_{\sigma_{i,i+1}\cdot k}$ 
for $k=1,\dots,\ell$ and 
$$
\tau_{\ell}\cdot e_{\ell}=-e_{\ell},\ 
\tau_{\ell}\cdot e_{k}=e_{k}\ (k=1,\dots,\ell-1).
$$
Define a linear isomorphism 
$$
\phi_{B}\,:\,
\langle\eta^{\BC}_{1},\dots,\eta^{\BC}_{\ell}\rangle_{\RB}
\longrightarrow V
$$
by $\phi_{B}(\eta^{\BC}_{k})=e_{k}$ for $k=1,\dots,\ell$. 
We prove that $\phi_{B}$ is a $W^B$-isomorphism 
by checking all actions of the generators are the same. 

Recall that the definitions 
$h^{\BC}_{k}=x_{k}(x_{k}^2-x_{1}^2)\cdots(x_{k}^2-x_{k-1}^2)
(x_{k}^2-x_{k+1}^2)\cdots(x_{k}^2-x_{\ell}^2)$ and 
$\eta_{k}^{\BC}=h^{\BC}_{k}\frac{1}{m!}\partial_{k}^m$. 
We have the following: 
\begin{align*}
\sigma_{i,i+1}\cdot h^{\BC}_{k}&=h^{\BC}_{k}\qquad
(k\neq i,i+1),\\
\sigma_{i,i+1}\cdot h^{\BC}_{i}&=h^{\BC}_{i+1},\\
\sigma_{i,i+1}\cdot h^{\BC}_{i+1}&=h^{\BC}_{i},\\
\tau_{\ell}\cdot h^{\BC}_{k}&=h^{\BC}_{k}
\qquad (k\neq \ell),\\
\tau_{\ell}\cdot h^{\BC}_{\ell}&=-h^{\BC}_{\ell}.
\end{align*}
Then
\begin{align*}
\sigma_{i,i+1}\cdot\eta^{\BC}_{k}=\eta^{\BC}_{k}\ &(k\neq i,i+1),\ 
\sigma_{i,i+1}\cdot\eta^{\BC}_{i}=\eta^{\BC}_{i+1},\ 
\sigma_{i,i+1}\cdot\eta^{\BC}_{i+1}=\eta^{\BC}_{i},\\
&\tau_{\ell}\cdot\eta^{\BC}_{k}=\eta^{\BC}_{k}\ (k\neq \ell),\ 
\tau_{\ell}\cdot\eta^{\BC}_{\ell}=\eta^{\BC}_{\ell}.
\end{align*}
The actions of the generators on 
$\{\eta^{\BC}_{k}\mid k=1,\dots,\ell\}$ coincide with the actions of 
the generators on $\{e_{k}\mid k=1,\dots,\ell\}$. 
Hence the map $\phi_{B}$ is a $W^B$-isomorphism.

$(3)$ The group $W^{D}$ is generated by elements 
$\sigma_{1,2},\dots,\sigma_{\ell-1,\ell}$ and 
$\sigma_{\ell-1,\ell}\tau_{\ell-1}\tau_{\ell}$. 
The generators acts on the standard basis 
$e_{1},\dots,e_{\ell}$ of $V$ by 
$\sigma_{i,i+1}\cdot e_{k}=e_{\sigma_{i,i+1}\cdot k}$ 
for $k=1,\dots,\ell$ and 
\begin{align*}
\sigma_{\ell-1,\ell}&\tau_{\ell-1}\tau_{\ell}\cdot 
e_{\ell}=-e_{\ell-1},\ 
\sigma_{\ell-1,\ell}\tau_{\ell-1}\tau_{\ell}\cdot 
e_{\ell-1}=-e_{\ell},\\
&\sigma_{\ell-1,\ell}\tau_{\ell-1}\tau_{\ell}\cdot
 e_{k}=e_{k}\qquad(k=1,\dots,\ell-2).
\end{align*}
We prove that a linear isomorphism 
$$
\phi_{D}\,:\,
\langle\eta^{\DC}_{1},\dots,\eta^{\DC}_{\ell}\rangle_{\RB}
\longrightarrow V
$$
defined by $\phi_{D}(\eta^{\DC}_{k})=e_{k}$ for $k=1,\dots,\ell$ 
is a $W^{D}$-isomorphism. 

Recall that the definitions 
$h^{\DC}_{k}=(x_{k}^2-x_{1}^2)\cdots(x_{k}^2-x_{k-1}^2)
(x_{k}^2-x_{k+1}^2)\cdots(x_{k}^2-x_{\ell}^2)$ and 
$\eta^{\DC}_{k}=\frac{h^{\DC}_{k}}{2x_{k}}\partial_{k}^2
-(-1)^{\ell-1}\frac{1}{x_{k}}\theta^{\DC}_{\lambda^{(0)}}$. 
Clearly we have 
$$
\sigma_{i,i+1}\cdot h^{\DC}_{i}=h^{\DC}_{i+1},\ 
\sigma_{i,i+1}\cdot h^{\DC}_{i+1}=h^{\DC}_{i},\ 
\sigma_{i,i+1}\cdot h^{\DC}_{k}=h^{\DC}_{k}\ (k\neq i,i+1),
$$
and then 
$$
\sigma_{i,i+1}\cdot \eta^{\DC}_{i}=\eta^{\DC}_{i+1},\ 
\sigma_{i,i+1}\cdot \eta^{\DC}_{i+1}=\eta^{\DC}_{i},\ 
\sigma_{i,i+1}\cdot \eta^{\DC}_{k}=\eta^{\DC}_{k}\ (k\neq i,i+1)
$$
by Lemma \ref{inv-theta}. 
The actions of $\sigma_{\ell-1,\ell}\tau_{\ell-1}\tau_{\ell}$ 
on $\{h^{\DC}_{k}\mid k=1,\dots,\ell\}$ are 
\begin{align*}
\sigma_{\ell-1,\ell}&\tau_{\ell-1}\tau_{\ell}\cdot 
h^{\DC}_{\ell}=h^{\DC}_{\ell-1},\ 
\sigma_{\ell-1,\ell}\tau_{\ell-1}\tau_{\ell}\cdot 
h^{\DC}_{\ell-1}=h^{\DC}_{\ell},\\
&\sigma_{\ell-1,\ell}\tau_{\ell-1}\tau_{\ell}\cdot
h^{\DC}_{k}=h^{\DC}_{k}\qquad(k=1,\dots,\ell-2).
\end{align*}
Then it follows from Lemma \ref{inv-theta} that 
\begin{align*}
\sigma_{\ell-1,\ell}\tau_{\ell-1}\tau_{\ell}\cdot 
\eta^{\DC}_{\ell}&=-\frac{h^{\DC}_{\ell-1}}{2x_{\ell-1}}\partial_{\ell-1}^2
+(-1)^{\ell-1}\frac{1}{x_{\ell-1}}\theta^{\DC}_{\lambda^{(0)}}=
-\eta^{\DC}_{\ell-1},\\
\sigma_{\ell-1,\ell}\tau_{\ell-1}\tau_{\ell}\cdot 
\eta^{\DC}_{\ell-1}&=-\frac{h^{\DC}_{\ell}}{2x_{\ell}}\partial_{\ell}^2
+(-1)^{\ell-1}\frac{1}{x_{\ell}}\theta^{\DC}_{\lambda^{(0)}}=
-\eta^{\DC}_{\ell},\\
\sigma_{\ell-1,\ell}\tau_{\ell-1}\tau_{\ell}\cdot
\eta^{\DC}_{k}&=\eta^{\DC}_{k}\quad(k=1,\dots,\ell-2).
\end{align*}
Thus the actions of the generators on 
$\{\eta^{\DC}_{k}\mid k=1,\dots,\ell\}$ coincide with the actions of 
the generators on $\{e_{k}\mid k=1,\dots,\ell\}$. 
Hence the map $\phi_{D}$ is a $W^D$-isomorphism.
\end{proof}
\begin{Cor}\label{repofbasis}
Assume that $m=2$. 
Let $W$ be a finite reflection group of type A, B or D. 
Let $\AS$ be the reflection arrangement corresponding to $W$. 
Then the vector space $X$ generated by the basis $C_{\AS}$ 
in Theorem \ref{basisAB} or Theorem \ref{basisD} 
is a $W$-module, and $X$ is isomorphic as a representeation to 
$X_{0}^{\sharp\Lambda}\oplus V$ 
where $X_{0}$ is the trivial representation.
\end{Cor}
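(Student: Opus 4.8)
The plan is to exhibit $X$ as an internal direct sum of two $W$-stable subspaces — one spanned by the $\theta$-operators, one by the $\eta$-operators — and then to read off the representation type of each summand from the lemmas already proved. Write $X_{0}$ for the trivial $W$-module, and set
$$
X_{\theta}:=\langle\theta^{\AS}_{\lambda}\mid\lambda\in\Lambda\rangle_{\RB},
\qquad
X_{\eta}:=\langle\eta^{\AS}_{i}\mid i=1,\dots,\ell\rangle_{\RB},
$$
where $\theta^{\AS}_{\lambda}$ and $\eta^{\AS}_{i}$ denote the operators making up $C_{\AS}$ for the type under consideration (so $\theta^{\AC}_{\lambda},\eta^{\AC}_{i}$ for type A, and likewise for B and D). By construction $X=X_{\theta}+X_{\eta}$.

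First I would check that this sum is direct. By Theorem \ref{basisAB} and Theorem \ref{basisD} the set $C_{\AS}$ is an $S$-basis for $D^{(2)}(\AS)$, so its elements are $S$-linearly independent and a fortiori $\RB$-linearly independent; hence $X=X_{\theta}\oplus X_{\eta}$ with $\dim_{\RB}X=\sharp\Lambda+\ell$. Next, since $m=2$, Lemma \ref{inv-theta} applies in all three types and shows that every $\theta^{\AS}_{\lambda}$ is $W$-invariant, so $X_{\theta}$ is fixed pointwise by $W$ and therefore $X_{\theta}\cong X_{0}^{\sharp\Lambda}$ as a $W$-module. By Lemma \ref{eta-V} the subspace $X_{\eta}$ is closed under the action of $W$, and the linear isomorphism of that lemma sending $\eta^{\AS}_{i}$ to the standard basis vector $e_{i}$ is a $W$-isomorphism $X_{\eta}\xrightarrow{\sim}V$. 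Consequently $X=X_{\theta}\oplus X_{\eta}$ is a decomposition into $W$-submodules; in particular $X$ is a $W$-module, and $X\cong X_{0}^{\sharp\Lambda}\oplus V$ as representations of $W$, which is the assertion.

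The only point requiring attention is the directness of the sum $X_{\theta}+X_{\eta}$: in type D one has $\deg\eta^{\DC}_{i}=\deg\theta^{\DC}_{\lambda^{(0)}}=2\ell-2$, so the two families cannot be separated by degree, and one genuinely needs the $\RB$-linear independence of all of $C_{\AS}$, which is supplied precisely by the fact that $C_{\AS}$ is an $S$-basis. Beyond this, the corollary is a formal consequence of Lemma \ref{inv-theta} and Lemma \ref{eta-V}, so no further computation is needed.
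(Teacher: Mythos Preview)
Your proof is correct and matches the paper's intended argument: the corollary is stated in the paper without proof, as an immediate consequence of Lemma \ref{inv-theta} (each $\theta^{\AS}_{\lambda}$ spans a trivial summand) and Lemma \ref{eta-V} (the span of the $\eta^{\AS}_{i}$ is $W$-isomorphic to $V$), together with the $S$-linear independence of $C_{\AS}$ from Theorems \ref{basisAB} and \ref{basisD}. Your remark about the degree coincidence in type~D is a useful clarification of why the basis property, rather than a degree argument, is what guarantees directness of the sum.
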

In the case of type $A_{\ell-1}$, the reflection group $W^{A}$ 
stabilizes the subspace $\RB(e_{1}+\cdots+e_{\ell})$ 
of $V$ pointwisely. The orthogonal complement $V^{\prime}$ of 
$\RB(e_{1}+\cdots+e_{\ell})$ is closed under the action of 
the reflection group $W^{A}$, and $V^{\prime}$ is essensial 
(see \cite{Hum}). 
Moreover $V^{\prime}$ is an irreducible representation. 
In the case of type B or D, a representation $V$ 
is irreducible (see Bourbaki \cite[Chap. 5, Sect. 3, Proposition 5]{Bou}). 

Let $D^{(m)}(S)^{W}$ be the set of invariant elements 
of $D^{(m)}(S)$. 
\begin{Them}
Assume that $m=2$. 
Let $W$ be a finite reflection group of type A, B or D, 
and let $\AS$ be the reflection arrangement corresponding to $W$. 
Then The module $D^{(2)}(\AS)$ cannot have bases 
consisting of only invariant elements.
\end{Them}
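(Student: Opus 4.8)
The plan is to reduce the non-existence of an invariant basis to the fact, essentially recorded in Corollary~\ref{repofbasis}, that the reflection representation $V$ occurs in the ``fibre at the origin'' of $D^{(2)}(\AS)$, whereas $V$ is not a trivial $W$-module. Concretely, let $\mathfrak m:=\bigoplus_{i\ge 1}S_{i}$ be the irrelevant maximal ideal of $S$ and put $N:=D^{(2)}(\AS)/\mathfrak m\,D^{(2)}(\AS)$. Since the $W$-action on $D^{(m)}(S)$ satisfies $w\cdot(g\theta)=(w\cdot g)(w\cdot\theta)$ for $g\in S$, the ideal $\mathfrak m$ is $W$-stable, and $D^{(2)}(\AS)$ is $W$-stable by Proposition~\ref{action}; hence $\mathfrak m\,D^{(2)}(\AS)$ is $W$-stable and $N$ is a finite-dimensional $W$-module with $W$-equivariant quotient map $q\colon D^{(2)}(\AS)\to N$. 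The module $N$ is intrinsic: it does not depend on any choice of basis.

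First I would compute $N$ from the explicit basis. By Theorem~\ref{basisAB} and Theorem~\ref{basisD}, $D^{(2)}(\AS)$ is free with basis $C_{\AS}$, so the restriction of $q$ to the $\RB$-span $X$ of $C_{\AS}$ is injective (because $X\cap\mathfrak m\,D^{(2)}(\AS)=0$ by comparing basis expansions) and surjective (by comparing dimensions, both being $s_{2}$), hence a $W$-equivariant isomorphism $X\xrightarrow{\ \sim\ }N$. Combined with Corollary~\ref{repofbasis} this gives $N\cong X_{0}^{\sharp\Lambda}\oplus V$ as $W$-modules. Next, suppose towards a contradiction that $D^{(2)}(\AS)$ has an $S$-basis $\{\psi_{1},\dots,\psi_{s_{2}}\}$ consisting of $W$-invariant operators (no homogeneity need be assumed). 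Then $q(\psi_{1}),\dots,q(\psi_{s_{2}})$ span $N$, and each $q(\psi_{i})$ is $W$-fixed since $w\cdot\psi_{i}=\psi_{i}$ and $q$ is equivariant; therefore $N$ is a trivial $W$-module. Comparing the two descriptions would force $V$ to be a trivial $W$-module.

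This is absurd: $\dim V=\ell\ge 2$, and for types $B_{\ell}$ and $D_{\ell}$ the representation $V$ is irreducible and nontrivial, while for type $A_{\ell-1}$ it contains the nontrivial standard $(\ell-1)$-dimensional irreducible $V'$ as a direct summand, as recalled just before the theorem. Hence no invariant $S$-basis exists. The main obstacle I anticipate is the identification $N\cong X$ \emph{as $W$-modules}: one must be sure that the intrinsically defined fibre $N$ agrees, as a representation, with the explicit span $X$ of $C_{\AS}$ on which Corollary~\ref{repofbasis} computes the $W$-action, and this is exactly the place where freeness of $D^{(2)}(\AS)$ (Theorems~\ref{basisAB} and~\ref{basisD}) is indispensable --- without it $N$ need not even have dimension $s_{2}$. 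A secondary care point is type $A$, where one cannot simply invoke irreducibility of $V$ but must pass to the summand $V'$. Beyond these points the argument involves no computation.
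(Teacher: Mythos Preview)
Your proof is correct and is a genuinely different (and arguably cleaner) route than the paper's. The paper argues directly: it assumes a homogeneous invariant basis $\theta_1',\dots,\theta_{s_2}'$, focuses on those $\theta_j'$ of top degree $\ell-1$ (respectively $2\ell-1$, $2\ell-2$), expands each in the explicit basis $C_{\AS}$, and uses Lemma~\ref{inv-theta} together with Lemma~\ref{eta-V} to conclude that for each such $\theta_j'$ the $\eta$-coefficients satisfy $a_1=\cdots=a_\ell$ (respectively all vanish), which forces the change-of-basis matrix in that degree to have rank $<\ell$ and hence the $\theta_j'$ to be $S$-dependent. You instead pass to the intrinsic fibre $N=D^{(2)}(\AS)/\mathfrak m\,D^{(2)}(\AS)$, identify it $W$-equivariantly with the span $X$ of $C_{\AS}$, and invoke Corollary~\ref{repofbasis} to see $N\cong X_0^{\sharp\Lambda}\oplus V$; an invariant basis would make $N$ trivial, contradicting the nontriviality of $V$ (or of $V'$ in type~A). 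Both arguments ultimately rest on Lemmas~\ref{inv-theta} and~\ref{eta-V} (packaged in Corollary~\ref{repofbasis}) and on the nontriviality of the reflection representation. Your approach gains a bit: it needs no homogeneity assumption on the putative invariant basis and avoids the degree-by-degree bookkeeping, at the small cost of introducing the quotient $N$. The ``care points'' you flag are exactly the right ones, and you handle them correctly.
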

\begin{proof}
Since proofs of type B and D are almost the same with the proof of type A, 
we prove the assertion only in the case of type A. 

We assume that $\AS=\AC_{\ell-1}$ and $W=W^{A}$. 
Let $\theta_{1}^{\prime},\dots,\theta_{s_{2}}^{\prime}$ be an 
$S$-basis for $D^{(2)}(\AS)$. Since the degrees 
do not depend on a choice of a basis, we have 
$\{\deg\theta_{1}^{\prime},\dots,\deg\theta_{s_{2}}^{\prime}\}
=\exp D^{(2)}(\AC_{\ell-1})$ by Theorem \ref{basisAB}. 
Assume 
$\deg\theta_{1}^{\prime}\leq\cdots\leq\deg\theta_{s_{2}}^{\prime}$. 
We show that there exists $\theta_{j}^{\prime}$ with 
$\deg\theta_{j}^{\prime}=\ell-1$ (replace $\ell-1$ by $2\ell-1$ 
and by $2\ell-2$ in the case type B and D, respectively) such that 
$\theta_{j}^{\prime}\notin D^{(2)}(S)^W$. 

Suppose that $\theta_{j}^{\prime}\in D^{(2)}(S)^W$ 
for any $j$. Since $C_{\AC}$ is a basis for $D^{(2)}(\AS)$ by 
Theorem \ref{basisAB}, we may write 
\begin{align*}
\theta_{j}^{\prime}=\sum_{\lambda}f_{\lambda}\theta^{\AC}_{\lambda}
+\sum_{k=1}^{\ell}a_{k}\eta_{k}^{\AC}
\end{align*}
for some $f_{\lambda}\in S$ and $a_{k}\in\RB$. 
For any $w\in W$, we have 
\begin{align*}
w\cdot\theta_{j}^{\prime}=\sum_{\lambda}(w\cdot 
f_{\lambda})\theta^{\AC}_{\lambda}
+w\cdot\sum_{k=1}^{\ell}a_{k}\eta_{k}^{\AC}
\end{align*}
by Lemma \ref{inv-theta}. Since 
$w\cdot\theta_{j}^{\prime}=\theta_{j}^{\prime}$ and 
$C_{\AC}$ is linearly independent over $S$, 
we have that $f_{\lambda}$ is $W$-invariant for 
$\lambda\in\Lambda$. Then 
$$
\sum_{k=1}^{\ell}a_{k}\eta_{k}^{\AC}\in D^{(2)}(S)^W.
$$

By Lemma \ref{eta-V}, the vector space 
$\langle\eta^{\AC}_{1},\dots,\eta^{\AC}_{\ell}\rangle_{\RB}
/\RB(\eta^{\AC}_{1}+\cdots+\eta^{\AC}_{\ell})$ is 
a nontrivial irreducible representation. Thus 
we have $a_{1}=\cdots=a_{\ell}$ 
(replace it by $a_{1}=0,\dots,a_{\ell}=0$ in the case of 
type B and D). 
This leads that $\theta_{1}^{\prime},\dots,\theta_{s_{2}}^{\prime}$ 
is linearly dependent over $S$. This is a contradiction. 
\end{proof}
\section*{Acknowledgements}
The author thanks Professor Soichi Okada
 for his helpful comments.

\end{document}